\begin{document}

\title{A Generalization of Rickart Modules}

\author{Burcu Ungor}
\address{Burcu Ungor, Department of Mathematics, Ankara University, Turkey}
\email{bungor@science.ankara.edu.tr}

\author{Sait Hal\i c\i oglu}
\address{Sait Hal\i c\i oglu,  Department of Mathematics, Ankara University, Turkey}
\email{halici@ankara.edu.tr}

\author{Abdullah Harmanci}
\address{Abdullah Harmanci, Department of Maths, Hacettepe University, Turkey}
\email{harmanci@hacettepe.edu.tr}

\date{}
\newtheorem{thm}{Theorem}[section]
\newtheorem{lem}[thm]{Lemma}
\newtheorem{prop}[thm]{Proposition}
\newtheorem{cor}[thm]{Corollary}
\newtheorem{exs}[thm]{Examples}
\newtheorem{defn}[thm]{Definition}
\newtheorem{nota}{Notation}
\newtheorem{rem}[thm]{Remark}
\newtheorem{ex}[thm]{Example}

\begin{abstract}
 Let $R$ be an arbitrary ring with
identity and $M$ a right $R$-module with $S=$ End$_R(M)$. In this
paper  we  introduce  $\pi$-Rickart modules as a generalization of
generalized right principally projective rings
 as well as that of Rickart modules. The module $M$ is called
{\it $\pi$-Rickart} if for any $f\in S$, there exist $e^2=e\in S$
and a positive integer $n$ such that $r_M(f^n)=eM$. We prove that
several results of Rickart modules can be extended to
$\pi$-Rickart modules for this general settings, and investigate
relations between a $\pi$-Rickart module and its endomorphism
ring.

 \vspace{2mm}
\noindent {\bf2000 MSC:}  16D10, 16D40, 16D80.

 \noindent {\bf Key
words}:  $\pi$-Rickart modules, Fitting modules, generalized right
principally projective rings.
\end{abstract}\maketitle

\section{Introduction}
\vskip 0.4 true cm \indent\indent

\indent\indent Throughout this paper $R$ denotes an associative
ring with identity, and modules are unitary right $R$-modules.
 For a module $M$,  $S=$ End$_R(M)$ is the ring of all right $R$-module endomorphisms of
$M$. In this work, for the $(S, R)$-bimodule $M$, $l_S(.)$ and
$r_M(.)$ are the left annihilator of a subset of $M$ in $S$ and
the right annihilator of a subset of $S$ in $M$, respectively.
 A ring is called {\it reduced} if it has no nonzero nilpotent elements. By considering
the right $R$-module $M$ as an $(S, R)$-bimodule the reduced ring
concept was considered for modules in \cite{AHH3}. The module $M$
is called {\it reduced } if for any $f\in S$ and $m\in M$, $fm =
0$ implies $fM\cap Sm = 0$. In \cite{Ka} {\it Baer rings} are
introduced as rings in which the right (left) annihilator of every
nonempty subset is generated by an idempotent. Principally
projective rings were introduced by Hattori \cite{Hat} to study
the torsion theory, that is,  a ring is called  {\it left (right)
principally projective} if every principal left (right) ideal is
projective. The concept of left (right) principally projective
rings (or left (right) Rickart rings) has been comprehensively
studied in the literature. Regarding a generalization of Baer
rings as well as
 principally projective rings, recall that a ring $R$ is called
{\it generalized left (right) principally projective} if for any
$x\in R$, the left (right) annihilator of $x^n$ is generated by an
idempotent for some positive integer n. A number of papers have
been written on generalized principally projective rings (see
\cite{Hir} and  \cite{HKL}).  According to Rizvi and Roman,  an
$R$-module $M$ is called {\it Baer} \cite{TR} if for any
$R$-submodule $N$ of $M$, $l_S(N)=Se$ with $e^2=e\in S$, while the
module $M$  is said to be {\it Rickart} \cite{TR3}  if for any
$f\in S$, $r_M(f)=eM$ for some $e^2=e\in S$. Recently, Rickart
modules are studied extensively by different authors (see
\cite{AHH3} and \cite{LRR}).

In what follows, we denote  by $\Bbb Z$ and  $\Bbb Z_n$ integers
and  the ring of integers modulo $n$, respectively, and $J(R)$
denotes the Jacobson radical of a ring $R$.

\section{$\pi$-Rickart Modules}
In this section, we introduce the concept of $\pi$-Rickart
modules. We supply an example to show that all $\pi$-Rickart
modules need not be Rickart. Although  every direct summand of a
$\pi$-Rickart module is $\pi$-Rickart, we present an example to
show that a direct sum of $\pi$-Rickart modules is not
$\pi$-Rickart. It is shown that the class of some abelian
$\pi$-Rickart modules is closed under direct sums. We begin with
our main definition.

\begin{defn} {\rm Let $M$ be an $R$-module with $S=$ End$_R(M)$. The module $M$ is
called {\it $\pi$-Rickart} if for any $f\in S$, there exist
$e^2=e\in S$ and a positive integer $n$ such that  $r_M(f^n)=eM$.}
\end{defn}
For the sake of brevity, in the sequel, $S$ will stand for the
endomorphism ring of the module $M$ considered.
\begin{rem}\label{son} $R$ is a $\pi$-Rickart $R$-module if and only
if $R$  is a generalized right principally projective ring.
\end{rem}
Every module of finite length, every semisimple, every nonsingular
injective (or extending) and every Baer module is a $\pi$-Rickart
module. Also every quasi-projective strongly co-Hopfian module,
every quasi-injective strongly Hopfian module, every Artinian and
Noetherian module is $\pi$-Rickart (see Corollary \ref{fgric}).
Every finitely generated module over a right Artinian ring is
$\pi$-Rickart (see Proposition \ref{artin}), every free module
which its endomorphism ring is  generalized right principally
projective is $\pi$-Rickart (see Corollary \ref{free}), every
finitely generated projective regular module is $\pi$-Rickart (see
Corollary \ref{fgprojreg}) and every finitely generated projective
module over a commutative $\pi$-regular ring is $\pi$-Rickart (see
Proposition \ref{fgproj}).

One may suspect that every $\pi$-Rickart module is Rickart. But
the following example illustrates that this is not the case.

\begin{ex}\rm{ Consider $M=\Bbb Z \oplus \Bbb{Z}_2$ as a $\Bbb
Z$-module. It can be easily determined that
$S=$ End$_\Bbb Z(M)$ is $\left [\begin{array}{cc}\Bbb Z & 0\\
\Bbb{Z}_2& \Bbb {Z}_2 \end{array}\right]$. For any $f=\left [\begin{array}{cc} a & 0\\
\overline{b}& \overline{c} \end{array}\right]\in S$, consider the
following cases.

$\texttt{Case 1}$. Assume that
$a=0,~\overline{b}=\overline{0},~\overline{c}=\overline{1}$ or
$a=0,~\overline{b}=\overline{c}=\overline{1}$. In both cases $f$
is an idempotent, and so  $r_M(f)=(1-f)M$.

$\texttt{Case 2}$. If $a\neq
0,~\overline{b}=\overline{0},~\overline{c}=\overline{1}$ or $a\neq
0,~\overline{b}=\overline{c}=\overline{1}$, then $r_M(f)=0$.

$\texttt{Case 3}$. If $a\neq
0,~\overline{b}=\overline{c}=\overline{0}$ or $a\neq
0,~\overline{b}=\overline{1},~\overline{c}=\overline{0}$, then
$r_M(f)=0\oplus \Bbb{Z}_2$.

$\texttt{Case 4}$. If $a=0,~\overline{b}=\overline{1}~,
\overline{c}=\overline{0}$, then $f^2=0$. Hence $r_M(f^2)=M$.

 Therefore $M$ is a $\pi$-Rickart module, but it is not Rickart by \cite{LRR}. }\end{ex}

Our next endeavor is to find conditions under which a
$\pi$-Rickart module is Rickart. We show that reduced rings play
an important role in this direction.

\begin{prop} If $M$ is a Rickart module, then it is $\pi$-Rickart. The converse holds if
$S$ is a reduced ring.
\end{prop}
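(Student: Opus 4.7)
The forward direction is immediate from the two definitions: if $M$ is Rickart then for any $f\in S$ one has $r_M(f)=eM$ for some $e^2=e\in S$, which is exactly the $\pi$-Rickart condition with $n=1$.

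For the converse, assume $S$ is reduced and fix $f\in S$. By $\pi$-Rickartness, choose $e^2=e\in S$ and a positive integer $n$ with $r_M(f^n)=eM$. The trivial inclusion $r_M(f)\subseteq r_M(f^n)=eM$ reduces the problem to proving $eM\subseteq r_M(f)$, and my plan is to deduce this by showing $fe=0$ as an element of $S$.

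The first ingredient is the standard fact that in a reduced ring every idempotent is central: from $e^2=e$ one checks that $\bigl(es(1-e)\bigr)^2=0$ and $\bigl((1-e)se\bigr)^2=0$, because $e(1-e)=(1-e)e=0$, and since $S$ has no nonzero nilpotents these elements vanish, giving $es=ese=se$ for every $s\in S$. Granted centrality, a short induction yields $(fe)^k=f^k e$ for all $k\geq 1$, and in particular $(fe)^n=f^n e$. Evaluating on an arbitrary $m\in M$ gives $(fe)^n(m)=f^n(em)$, and since $em\in eM=r_M(f^n)$ this is zero; hence $(fe)^n=0$ in $S$. Reducedness then forces $fe=0$, so $f(em)=0$ for every $m\in M$, i.e., $eM\subseteq r_M(f)$, completing the proof that $r_M(f)=eM$.

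I do not expect a genuine obstacle here. The only substantive point is translating the module-level equality $r_M(f^n)=eM$ into the ring-level equation $(fe)^n=0$, which is exactly what centrality of the idempotent $e$ (itself a consequence of reducedness) allows; the defining property of a reduced ring then finishes the argument.
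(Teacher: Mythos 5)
Your proof is correct and follows essentially the same route as the paper: both use reducedness to conclude that $e$ is central, deduce $(fe)^n=f^ne=0$, hence $fe=0$, and combine $eM\subseteq r_M(f)$ with the trivial inclusion $r_M(f)\subseteq r_M(f^n)=eM$. You merely supply the standard details (centrality of idempotents in reduced rings) that the paper leaves implicit.
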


\begin{proof}  The first assertion is clear. For the second, let $f\in S$. Since
$M$ is  $\pi$-Rickart, $r_M(f^n)=eM$ for some positive integer $n$
and $e^2=e\in S$. If $n=1$, then there is nothing to do. Assume
that $n>1$. Since $S$ is a reduced ring, $e$ is central and so
$(fe)^n=0$. It follows that $fe=0$. Hence $eM\leq r_M(f)$. On the
other hand, always $r_M(f)\leq r_M(f^n)=eM$. Therefore $M$ is
Rickart.
\end{proof}

Reduced modules are studied in \cite{AHH3} and it is shown that if
$M$ is a reduced module, then $S$ is a reduced ring. Hence we have
the following.

\begin{cor}\label{red} If $M$ is a reduced module, then it is
Rickart if and only if it is $\pi$-Rickart.
\end{cor}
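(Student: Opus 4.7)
The corollary follows almost immediately from the preceding Proposition, so my plan is to reduce it to that result rather than redo any annihilator computations. The forward implication (Rickart $\Rightarrow$ $\pi$-Rickart) is literally the first assertion of the previous Proposition and requires no further argument: taking $n=1$ in the definition of $\pi$-Rickart recovers the Rickart condition.

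For the converse, the plan is to invoke the hypothesis that $M$ is reduced together with the cited fact from \cite{AHH3} that reducedness of the module $M$ forces the endomorphism ring $S$ to be reduced. Once $S$ is known to be a reduced ring, the second half of the previous Proposition applies directly and gives $\pi$-Rickart $\Rightarrow$ Rickart. So the whole argument is a two-line chain: reduced module $\Rightarrow$ $S$ reduced $\Rightarrow$ the converse clause of the Proposition kicks in.

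There is really no obstacle here; the only thing to be careful about is to cite the correct statement from \cite{AHH3} (namely, ``$M$ reduced implies $S=\mathrm{End}_R(M)$ reduced'') rather than attempting to re-derive it, since the authors have just reminded the reader of it in the sentence preceding the corollary. Thus the proof will be essentially a one-sentence appeal to the preceding Proposition, with the reducedness of $S$ supplied by the module-theoretic hypothesis.
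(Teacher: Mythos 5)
Your proposal is correct and matches the paper's reasoning exactly: the paper derives this corollary by combining the preceding Proposition with the cited fact from \cite{AHH3} that a reduced module has a reduced endomorphism ring. Nothing is missing.
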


We obtain the following well known result (see \cite[Lemma
1]{HKL}).

\begin{cor} Let $R$ be a reduced ring.  Then  $R$ is a right Rickart
ring  if and only if $R$ is a generalized right principally
projective ring.
\end{cor}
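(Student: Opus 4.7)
The plan is to deduce this corollary by specializing Corollary \ref{red} (which characterizes when a reduced module is Rickart vs.\ $\pi$-Rickart) to the regular module $M = R_R$, and then translating both sides of that biconditional into ring-theoretic language via Remark \ref{son} and the analogous well-known identification on the Rickart side.

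First I would verify that the hypotheses line up: if $R$ is a reduced ring, then the right $R$-module $R_R$ is a reduced module in the sense defined in the paper. Indeed, $\mathrm{End}_R(R_R) \cong R$ acting by left multiplication, so for $f \in S$ and $m \in R$ the condition ``$fm = 0$ implies $fR \cap Rm = 0$'' reduces to a statement about products in the reduced ring $R$, which is immediate since reduced rings are semiprime (in fact, $ab = 0$ implies $ba = 0$, so $aR \cap Rb \subseteq aRb = 0$). Hence $R_R$ satisfies the reduced module hypothesis required by Corollary \ref{red}.

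Next I would invoke Corollary \ref{red} with $M = R_R$ to obtain: $R_R$ is a Rickart module if and only if $R_R$ is a $\pi$-Rickart module. The final step is the translation: by Remark \ref{son}, ``$R_R$ is $\pi$-Rickart'' is precisely ``$R$ is generalized right principally projective,'' and the analogous (and standard) identification of the right Rickart ring condition with ``$R_R$ is a Rickart module'' follows straight from the definitions, since $r_R(f) = eR$ for $f \in S \cong R$ and some idempotent $e$ is exactly the statement that the right annihilator of any element of $R$ is generated by an idempotent. Chaining these equivalences yields the claim.

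I do not anticipate any real obstacle here — this is essentially a dictionary translation from the module-theoretic results already established to their ring-theoretic specializations. The only mild care needed is in checking that a reduced ring really does give rise to a reduced module (not automatic from the definitions without a line of verification), but this is a one-step consequence of the fact that in a reduced ring $ab = 0 \Rightarrow aRb = 0$.
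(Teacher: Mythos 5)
Your overall route is exactly the paper's: the corollary is stated there without proof as the specialization of Corollary \ref{red} to $M=R_R$, combined with Remark \ref{son} and the standard identification of right Rickart rings with Rickart modules over themselves. The one point that needs care is precisely the one you flag --- that a reduced ring yields a reduced module --- and your justification of it contains a slip: the inclusion $aR\cap Rb\subseteq aRb$ does not hold in general (an element $ar$ of $aR$ need not lie in $aRb$ even when it also lies in $Rb$). The correct one-line argument is: if $x\in aR\cap Rb$, write $x=ar=sb$; then $x^2=(ar)(sb)=a(rs)b\in aRb=0$ (using that $ab=0$ forces $aRb=0$ in a reduced ring), so $x=0$ by reducedness. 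With that repair the verification goes through and the rest of your dictionary translation is correct and matches the paper's intent.
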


\begin{lem}\label{idemp} If
$M$ is a $\pi$-Rickart module, then every non-nil left annihilator
in $S$ contains a nonzero idempotent.
\end{lem}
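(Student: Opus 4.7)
The plan is to take a non-nil left annihilator $L = l_S(X)$ for some subset $X \subseteq M$, extract a non-nilpotent element $f \in L$, apply the $\pi$-Rickart hypothesis to $f$, and exhibit the complementary idempotent $1-e$ as the required nonzero idempotent inside $L$.

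First, since $L$ is not nil, I would pick $f \in L$ with $f^k \neq 0$ for every positive integer $k$. The $\pi$-Rickart hypothesis then supplies an idempotent $e \in S$ and an integer $n \geq 1$ with $r_M(f^n) = eM$. The assumption $f \in L = l_S(X)$ is just the statement $fX = 0$, so $X \subseteq r_M(f) \subseteq r_M(f^n) = eM$. Consequently $ex = x$ for every $x \in X$, which rewrites as $(1-e)X = 0$, i.e., $1-e \in L$.

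It remains to see that $1-e \neq 0$. Since $f$ is not nilpotent, $f^n \neq 0$, hence $r_M(f^n) \neq M$, so $eM \neq M$. But $M = eM \oplus (1-e)M$, so $eM = M$ would force $(1-e)M = 0$ and thus $e = 1$; we conclude $e \neq 1$, so $1-e$ is a nonzero idempotent of $S$ lying in $L$.

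There is no real obstacle here; the only substantive step is the observation that $fX = 0$ forces $X$ into the kernel $r_M(f^n) = eM$, after which the idempotent produced by the $\pi$-Rickart condition automatically yields its complement $1-e$ inside the annihilator $L$. The mild subtlety worth being explicit about is the passage from $eM \neq M$ to $e \neq 1$, which uses the direct sum decomposition $M = eM \oplus (1-e)M$ afforded by the idempotency of $e$.
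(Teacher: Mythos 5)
Your proof is correct and follows essentially the same route as the paper: pick a non-nilpotent $f$ in the annihilator, apply the $\pi$-Rickart condition to get $r_M(f^n)=eM$ with $e\neq 1$, and show $1-e$ lies in the annihilator. The only (harmless) difference is that you deduce $(1-e)X=0$ directly from $X\subseteq r_M(f)\subseteq eM$, whereas the paper routes through $r_M(I)$ and the identity $l_S(r_M(l_S(N)))=l_S(N)$; your shortcut is slightly cleaner and equally valid.
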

\begin{proof} Let $I=l_S(N)$ be a non-nil left annihilator where $\emptyset \neq
N\subseteq M$ and choose $f\in I$ be a non-nilpotent element. As
$M$ is $\pi$-Rickart, $r_M(f^n)=eM$ for some idempotent $e\in S$
and a positive integer $n$. In addition $e\neq 1$. Due to
$r_M(I)\subseteq r_M(f^n)$, we have $(1-e)r_M(I)=0$. It follows
that $1-e\in l_S(r_M(I))=l_S(r_M(l_S(N)))=l_S(N)=I$. This
completes the proof.
\end{proof}

We now give a relation among $\pi$-Rickart modules, Rickart
modules and Baer modules by using Lemma \ref{idemp}.

\begin{thm}\label{ort-idemp} Let $M$ be a module. If $S$ has no infinite set of nonzero orthogonal
idempotents and $J(S)=0$, then the following are equivalent.
\begin{enumerate}
    \item[{\rm (1)}] $M$ is a  $\pi$-Rickart module.
   \item[{\rm (2)}] $M$ is a Rickart module.
   \item[{\rm (3)}] $M$ is a Baer module.
\end{enumerate}
\end{thm}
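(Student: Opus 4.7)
I would structure the argument as $(3) \Rightarrow (2) \Rightarrow (1) \Rightarrow (3)$. The first two implications require neither finiteness assumption: a Rickart module is $\pi$-Rickart with $n = 1$, and if $M$ is Baer then for any $f \in S$ the Baer property applied to $N = r_M(f)$ produces an idempotent $e$ with $l_S(r_M(f)) = Se$; since $f \in l_S(r_M(f))$ one reads off $fe = f$ and $r_M(f) = (1-e)M$.

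All the work lies in $(1) \Rightarrow (3)$. Given a submodule $N \le M$, set $I = l_S(N)$ and, using that $S$ has no infinite orthogonal set of idempotents, choose a \emph{finite} maximal orthogonal family $\{e_1, \ldots, e_k\}$ of idempotents contained in $I$. Put $e = e_1 + \cdots + e_k \in I$; the inclusion $Se \subseteq I$ is free, so it suffices to show $I(1-e) = 0$.

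For this, observe first that $K := I(1-e)$ is a left ideal of $S$. If every element of $K$ were nilpotent, $K$ would be a nil left ideal, hence contained in $J(S) = 0$; this is the role of the hypothesis $J(S) = 0$. So I may pick $f \in I$ for which $g := f(1-e)$ is non-nilpotent. Note that $g \in I$ (since $(1-e)N = N$) and that $g e_i = 0$ for every $i$ (since $(1-e) e_i = 0$). Applying the $\pi$-Rickart hypothesis to $g$ produces $h^2 = h \in S$ and $n \ge 1$ with $r_M(g^n) = hM$, where $h \ne 1$ because $g^n \ne 0$. The relations $gN = 0$ and $g e_i = 0$ translate into $N \subseteq hM$ (so $1 - h \in I$) and $e_i M \subseteq hM$ (so $h e_i = e_i$, and in particular $he = e$).

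The natural candidate $1 - h$ for an idempotent contradicting maximality fails because one only has the one-sided relation $he = e$, not $eh = e$; this is the main obstacle. I would resolve it by the corner adjustment
\[
h' := e + (1-e)\, h\, (1-e),
\]
for which a direct computation using $he = e$ shows (a) $h'$ is idempotent with $h' M = hM = r_M(g^n)$, (b) $h' e_i = e_i h' = e_i$ for every $i$, and (c) $h' \ne 1$ (else $hM$ would contain both $eM$ and $(1-e)M$, forcing $hM = M$ and hence $h = 1$). Items (a)--(c) make $1 - h'$ a \emph{nonzero} idempotent lying in $I$ and orthogonal to each $e_i$, contradicting the maximality of $\{e_1, \ldots, e_k\}$. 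Hence $I(1-e) = 0$, so $l_S(N) = Se$, and $M$ is Baer.
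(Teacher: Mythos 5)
Your argument is correct, and it takes a recognizably different technical route from the paper's, though the overall strategy for $(1)\Rightarrow(3)$ is the same: dispose of the nil case using $J(S)=0$, make an extremal choice of idempotent inside $I=l_S(N)$, and contradict extremality by manufacturing a fresh idempotent from the $\pi$-Rickart hypothesis. The paper first isolates a lemma (every non-nil left annihilator in $S$ contains a nonzero idempotent, proved by exactly the ``apply $\pi$-Rickart to a non-nilpotent element and take $1-e$'' step you perform inline), then converts the no-infinite-orthogonal-idempotents hypothesis into DCC on left direct summands via Proposition 6.59 of Lam's book, and chooses a single nonzero idempotent $e\in I$ with $S(1-e)=l_S(eM)$ minimal; the contradiction there uses an idempotent $f$ of $I\cap l_S(eM)=l_S(N\cup eM)$ --- which is again a left annihilator, so the lemma applies directly --- together with the combination $g=e+(1-e)f$, for which only the one-sided relation $fe=0$ is needed. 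Your maximal-orthogonal-family version uses the hypothesis in raw form and is self-contained, but you must pay for the fact that $I(1-e)$ is merely a left ideal rather than a left annihilator: hence the hand-picked non-nilpotent $g=f(1-e)$, and the corner adjustment $h'=e+(1-e)h(1-e)$ needed to upgrade the one-sided relation $he=e$ to two-sided orthogonality with the $e_i$ (a point you correctly identify as the main obstacle, and your computations $h'M=hM$, $h'e_i=e_ih'=e_i$, $h'\neq 1$ all check out). Both proofs are complete; the paper's is shorter because the lemma and the DCC citation absorb the bookkeeping, while yours makes the role of each hypothesis more transparent and avoids invoking the equivalence with DCC on summands.
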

\begin{proof} It is enough to show that (1) implies (3). Consider
any left annihilator $I=l_S(N)$ where $\emptyset \neq N\subseteq
M$. If $I$ is nil, then $I\subseteq J(S)$, and so $I=0$. Thus we
may assume that $I$ is not nil. By \cite[Proposition 6.59]{L}, $S$
satisfies DCC on left direct summands, and so among all nonzero
idempotents in $I$, choose $e\in I$ such that $S(1-e)=l_S(eM)$ is
minimal. We claim that $I\cap l_S(eM)=0$. Note that $I\cap
l_S(eM)=l_S(N\cup eM)$. If $I\cap l_S(eM)$ is nil, then there is
nothing to do. Now we assume that $I\cap l_S(eM)$ is not nil. If
$I\cap l_S(eM)\neq 0$, then there exists $0\neq f=f^2\in I\cap
l_S(eM)$ by Lemma \ref{idemp}. Since $fe=0$, $e+(1-e)f\in I$ is an
idempotent, say $g=e+(1-e)f$. Then $ge=e$, and so $g\neq 0$. Also
$fg=f$. This implies that $l_S(gM)\subsetneq l_S(eM)$. This
contradicts to the choice of $e$. Hence $I\cap l_S(eM)=0$. Due to
$\varphi(1-e)\in I\cap l_S(eM)$ for any $\varphi\in I$, we have
$\varphi=\varphi e$. Thus $I\subseteq Se$, and clearly
$Se=I=l_S(N)$. Therefore $M$ is Baer.
\end{proof}

\begin{cor} Let $R$ be a ring. If $R$ has no infinite set of nonzero orthogonal
idempotents and $J(R)=0$, then the following are equivalent.
\begin{enumerate}
    \item[{\rm (1)}] $R$ is a generalized right principally projective ring.
   \item[{\rm (2)}]  $R$ is a right Rickart ring.
   \item[{\rm (3)}] $R$ is a Baer ring.
\end{enumerate}
\end{cor}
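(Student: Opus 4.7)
The plan is to deduce this corollary directly from Theorem \ref{ort-idemp} by taking $M = R_R$ as the module under consideration. Under this choice, the endomorphism ring $S = \text{End}_R(R_R)$ is canonically isomorphic to $R$ via the map sending $r \in R$ to left multiplication $\ell_r$, so the module-theoretic hypotheses on $S$ become precisely ring-theoretic hypotheses on $R$.

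First I would verify that the three conditions in the corollary correspond exactly to the three conditions in Theorem \ref{ort-idemp}. Condition (1), that $R$ is generalized right principally projective, is exactly the statement that $R_R$ is $\pi$-Rickart by Remark \ref{son}. For condition (2), one unwinds the definition: $R_R$ is a Rickart module iff for every $f \in S \cong R$, $r_{R}(f) = eR$ for some idempotent $e$, and this is precisely the classical definition of a right Rickart ring. Similarly for (3), a Baer module $R_R$ requires $\ell_S(N) = Se$ for every submodule $N$ of $R_R$, i.e.\ every right ideal of $R$; since every nonempty subset and the right ideal it generates have the same left annihilator, this is equivalent to $R$ being a Baer ring.

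Next I would translate the hypotheses. Under the identification $S \cong R$, idempotents in $S$ correspond to idempotents in $R$, so ``no infinite set of nonzero orthogonal idempotents'' transfers directly. Likewise $J(S) = 0$ becomes $J(R) = 0$. With both hypotheses of Theorem \ref{ort-idemp} satisfied, the equivalences (1) $\Leftrightarrow$ (2) $\Leftrightarrow$ (3) for $M = R_R$ follow, and the dictionary above converts them into the ring-theoretic equivalences required.

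I do not anticipate a serious obstacle here; the whole content is the identification $S \cong R$ and the checking that each module-theoretic notion specializes to the claimed ring-theoretic one. The only minor point deserving a line of justification is the equivalence between the Baer module definition (taking left annihilators of arbitrary subsets of $M$) and the Baer ring definition (annihilators of arbitrary subsets of $R$), which is immediate from the fact that $\ell_R(X) = \ell_R(XR)$ for any $X \subseteq R$.
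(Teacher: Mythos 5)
Your proposal is correct and is exactly how the paper intends the corollary to be read: it is the specialization of Theorem \ref{ort-idemp} to $M = R_R$, using the canonical identification $S = \mathrm{End}_R(R_R) \cong R$ together with Remark \ref{son} and the standard translations of the Rickart and Baer module conditions into their ring-theoretic counterparts. The paper omits the proof entirely, and your careful checking of the dictionary (including the point that $l_R(X) = l_R(XR)$) fills in precisely what is implicitly being used.
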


\begin{cor} Let $M$ be a module. If $S$ is a semisimple ring, then the following are
equivalent.
\begin{enumerate}
    \item[{\rm (1)}] $M$ is a  $\pi$-Rickart module.
   \item[{\rm (2)}] $M$ is a Rickart module.
   \item[{\rm (3)}] $M$ is a Baer module.
\end{enumerate}
\end{cor}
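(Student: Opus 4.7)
The plan is to reduce the corollary directly to Theorem \ref{ort-idemp} by verifying that a semisimple ring $S$ automatically satisfies the two hypotheses appearing there, namely $J(S) = 0$ and the absence of an infinite set of nonzero orthogonal idempotents.

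First I would recall that by a semisimple ring we mean a semisimple Artinian ring (equivalently, by Wedderburn--Artin, a finite direct product of matrix rings over division rings). The first hypothesis, $J(S) = 0$, is standard: the Jacobson radical of any semisimple ring is zero, since $S$ decomposes as a direct sum of simple left ideals and $J(S)$ must annihilate each such simple module.

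For the second hypothesis, I would argue that any ring $S$ with DCC on left ideals cannot contain an infinite family $\{e_i\}_{i \ge 1}$ of nonzero orthogonal idempotents. Indeed, from such a family one obtains the strictly descending chain
\[
\bigoplus_{i \ge 1} S e_i \;\supsetneq\; \bigoplus_{i \ge 2} S e_i \;\supsetneq\; \bigoplus_{i \ge 3} S e_i \;\supsetneq\; \cdots
\]
of left ideals of $S$, contradicting the Artinian property. Since semisimple rings are left Artinian, $S$ has no infinite set of nonzero orthogonal idempotents.

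With both hypotheses of Theorem \ref{ort-idemp} verified, the equivalence of (1), (2), and (3) is immediate. There is no real obstacle here; the only thing to be careful about is the convention that ``semisimple'' means semisimple Artinian, which is the standard meaning in this context and the reason both conditions of the previous theorem hold at once.
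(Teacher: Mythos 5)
Your proposal is correct and follows the paper's own route: semisimple gives $J(S)=0$ and left Artinian, hence no infinite set of nonzero orthogonal idempotents, and Theorem \ref{ort-idemp} finishes the argument. The only cosmetic difference is that you prove the ``Artinian implies no infinite orthogonal idempotents'' step directly via a strictly descending chain, whereas the paper simply cites \cite[Proposition 6.59]{L}.
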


\begin{proof} Since $S$ is semisimple, we have $J(S)=0$ and $S$ is
left Artinian. Then $S$ has no infinite set of nonzero orthogonal
idempotents by \cite[Proposition 6.59]{L}. Hence Theorem
\ref{ort-idemp} completes the proof.
\end{proof}

\begin{cor} If $M$ is  Noetherian (Artinian) and $J(S)=0$, then
the following are equivalent.
\begin{enumerate}
    \item[{\rm (1)}] $M$ is a  $\pi$-Rickart module.
   \item[{\rm (2)}] $M$ is a Rickart module.
   \item[{\rm (3)}] $M$ is a Baer module.
\end{enumerate}
\end{cor}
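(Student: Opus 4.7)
The plan is to reduce this corollary directly to Theorem \ref{ort-idemp}. Since $J(S) = 0$ is already part of the hypothesis, the only remaining thing I need to verify is that the assumption ``$M$ is Noetherian (or Artinian)'' forces $S$ to have no infinite set of nonzero orthogonal idempotents. Once this is checked, applying Theorem \ref{ort-idemp} immediately yields the equivalence of (1), (2) and (3).

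To establish the idempotent finiteness condition, I would start by supposing, toward a contradiction, that $\{e_i\}_{i\in \mathbb{N}}$ is an infinite set of nonzero pairwise orthogonal idempotents in $S$. The first key step is to observe that the submodules $e_i M$ of $M$ form an internal direct sum: if $x \in e_i M \cap \sum_{j\neq i} e_j M$, write $x = e_i m = \sum_{j \neq i} e_j m_j$ and apply $e_i$ to both sides, using $e_i e_j = 0$ for $j\neq i$ and $e_i^2 = e_i$, to conclude $x = e_i x = 0$.

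With the direct sum $\bigoplus_{i=1}^\infty e_i M$ inside $M$, one produces a strictly ascending chain $e_1 M \subsetneq e_1 M \oplus e_2 M \subsetneq \cdots$ of $R$-submodules of $M$ (strict because each $e_i \neq 0$, so $e_i M \neq 0$), contradicting the Noetherian hypothesis. In the Artinian case one instead uses the strictly descending chain $\bigoplus_{i\geq 1} e_i M \supsetneq \bigoplus_{i\geq 2} e_i M \supsetneq \cdots$, each inclusion being strict for the same reason. Either way, $S$ cannot contain an infinite set of nonzero orthogonal idempotents.

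Having verified the hypotheses of Theorem \ref{ort-idemp}, the corollary follows at once. I do not foresee any real obstacle here; the only mildly delicate point is the verification that orthogonal idempotents in the endomorphism ring indeed produce an honest direct sum of submodules of $M$, but this is the standard one-line computation sketched above.
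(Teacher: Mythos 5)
Your proposal is correct and follows the same route as the paper: the paper's proof simply asserts that $S$ has no infinite set of nonzero orthogonal idempotents when $M$ is Noetherian or Artinian and then invokes Theorem \ref{ort-idemp}, exactly as you do. Your verification of the direct-sum/chain argument is the standard one and fills in the detail the paper leaves implicit.
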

\begin{proof} $S$ has no infinite set of nonzero orthogonal
idempotents in case $M$ is  either Noetherian or Artinian. The
rest is clear from Theorem \ref{ort-idemp}.
\end{proof}

Modules which contain  $\pi$-Rickart modules need not be
$\pi$-Rickart, as the following example shows.

\begin{ex}\label{orn2}{\rm Let $R$ denote the ring $\left[\begin{array}{ll}\Bbb
Z&\Bbb Z\\ 0&\Bbb Z
\end{array}\right]$ and $M$ the right $R$-module
$\left [\begin{array}{cc}\Bbb Z & \Bbb Z\\\Bbb Z & \Bbb Z
\end{array}\right]$. Let $f\in S$ be defined by $f\left[\begin{array}{cc}x &
y\\r & s\end{array}\right]= \left[\begin{array}{cc}2x+3r &
2y+3s\\0 & 0\end{array}\right]$, where $\left[\begin{array}{cc}x &
y\\r & s\end{array}\right]\in M$. Then $r_M(f)=\left\{
\left[\begin{array}{rr} 3k & 3z\\-2k & -2z\end{array}\right] : k,
z\in \Bbb Z \right\}$. Since $r_M(f)$ is not a direct summand of
$M$ and  $r_M(f)=r_M(f^n)$ for any integer $n\geq 2$,  $M$ is not
a $\pi$-Rickart module. On the other hand,
consider the submodule $N=\left[\begin{array}{ll}\Bbb Z&\Bbb Z\\
0&0
\end{array}\right]$ of $M$. Then End$_R(N)=\left[\begin{array}{ll}\Bbb Z&0\\ 0&0
\end{array}\right]$. It is easy to show that $N$ is a Rickart module and so
it is $\pi$-Rickart.}
\end{ex}

In \cite[Proposition 2.4]{AHH3}, it is shown that every direct
summand of a Rickart module is Rickart. We now prove that every
direct summand of a $\pi$-Rickart module inherits this  property.

\begin{prop}\label{smnd} Every direct summand of a $\pi$-Rickart module is
\linebreak $\pi$-Rickart.
\end{prop}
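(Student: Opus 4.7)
The plan is to exploit the identification $T := \operatorname{End}_R(N) \cong eSe$, where $M = N \oplus K$ and $e \in S$ is the projection onto $N$ with kernel $K$. Given any $g \in T$, I would extend it to $\bar g \in S$ by setting $\bar g|_K = 0$; a trivial induction then gives $\bar g^{\,n}(n_0 + k) = g^n(n_0)$ for every $n_0 \in N$, $k \in K$, so that $r_M(\bar g^{\,n}) = r_N(g^n) \oplus K$ for every positive integer $n$.

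Applying the $\pi$-Rickart property of $M$ to $\bar g$, I obtain an idempotent $\varepsilon \in S$ and a positive integer $n$ with $r_M(\bar g^{\,n}) = \varepsilon M$. In particular $K \subseteq \varepsilon M$, so $\varepsilon$ restricts to the identity on $K = (1-e)M$; this can be rewritten as $(1-\varepsilon)(1-e) = 0$, equivalently $\varepsilon e = \varepsilon - (1-e)$. My candidate idempotent in $T$ is $e'' := e\varepsilon$. The displayed identity gives $e\varepsilon e = e\varepsilon - e(1-e) = e\varepsilon$, so $e'' \in eSe$, and then $(e\varepsilon)^2 = (e\varepsilon e)\varepsilon = e\varepsilon^2 = e\varepsilon$, so $e''$ is idempotent. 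Finally, $e''M = e(\varepsilon M) = e\bigl(r_N(g^n) \oplus K\bigr) = r_N(g^n)$, which is precisely $e''N$ after restricting to $N = eM$.

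The only delicate point is the choice of $e''$: the idempotent $\varepsilon$ furnished by $M$ lives in $S$ and not a priori in $eSe$, and the naive restriction of $\varepsilon$ to $N$ need not even be a self-map of $N$, let alone an idempotent of $T$. The main obstacle is therefore to convert $\varepsilon$ into an idempotent of $T$ whose image is still $r_N(g^n)$; the containment $K \subseteq \varepsilon M$ forced by $\bar g^{\,n}(K) = 0$ is exactly what yields the algebraic identity $e\varepsilon e = e\varepsilon$ on which the whole argument hinges.
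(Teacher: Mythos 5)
Your proof is correct. Both you and the paper start identically: extend $g\in$ End$_R(N)$ to $\bar g=g\oplus 0|_K\in S$ and apply the $\pi$-Rickart property of $M$ to $\bar g$, obtaining $r_M(\bar g^{\,n})=\varepsilon M$ for some idempotent $\varepsilon\in S$. The two arguments diverge in how this summand of $M$ is converted into a summand of $N$. The paper chooses a complement $Q$ of $\varepsilon M$ in $M$, refines $\varepsilon M=K\oplus L$ using $K\subseteq\varepsilon M$, transports via the projection $\pi_N$ (so that $N=\pi_N(Q)\oplus\pi_N(L)$), and proves $r_N(g^n)=\pi_N(L)$ by a two-inclusion argument, one direction by contradiction. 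You instead record the identity $r_M(\bar g^{\,n})=r_N(g^n)\oplus K$ explicitly, note that $K\subseteq\varepsilon M$ forces $\varepsilon$ to fix $K=(1-e)M$ pointwise, hence $(1-\varepsilon)(1-e)=0$ and $\varepsilon e=\varepsilon-(1-e)$, and from this extract the corner idempotent $e\varepsilon=e\varepsilon e\in eSe\cong$ End$_R(N)$ whose image is exactly $r_N(g^n)$; all the verifications ($e\varepsilon e=e\varepsilon$, $(e\varepsilon)^2=e\varepsilon$, $e\varepsilon M=r_N(g^n)=(e\varepsilon)N$) check out. Your route is shorter and purely ring-theoretic: it avoids choosing complements and invoking a projection isomorphism, and it produces the required idempotent of End$_R(N)$ by an explicit formula rather than exhibiting $r_N(g^n)$ as one factor of an abstract decomposition. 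The paper's version buys a concrete description of a complement of $r_N(g^n)$ inside $N$, but at the cost of a longer, more delicate argument.
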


\begin{proof} Let $M = N\oplus P$ be an  $R$-module with $S=$
End$_R(M)$ and $S_{N}=$ End$_{R}(N)$. For any $f\in S_{N}$, define
$g = f\oplus 0_{|_{P}}$ and so $g\in S$. By hypothesis, there
exist a positive integer $n$ and $e^2 = e\in S$ such that
$r_M(g^n) = eM$ and $g^n = f^n \oplus 0_{|_{P}}$. Let $M =
eM\oplus Q$. Since $P\subseteq eM$, there exists $L\leq eM$ such
that $eM = P\oplus L$. So we have $M = eM\oplus Q = P\oplus
L\oplus Q$. Let $\pi_{N}: M\rightarrow N$ be the projection of $M$
onto $N$. Then  $\pi_{N}\mid_{Q\oplus L}: Q\oplus L\rightarrow N$
is an isomorphism. Hence $N = \pi_{N}(Q)\oplus \pi_{N}(L)$. We
claim that $r_{N}(f^n) = \pi_{N}(L)$. We get $g^n(L)=0$  since
$g^n(P\oplus L)=0$. But for all $l\in L$,
$l=\pi_{N}(l)+\pi_{P}(l)$. Since $g^n(l) = g^n\pi_{N}(l) +
g^n\pi_{P}(l)$ and $g^n(l) = 0$ and $g^n\pi_{P}(l) = 0$ and
$g^n\pi_{N}(l)= f^n\pi_{N}(l)$, we have $f^n\pi_{N}(L) = 0$ and so
 $\pi_{N}(L)\subseteq r_{N}(f^n)$. For the reverse inclusion, let
 $n\in r_{N}(f^n)$. Assume that $n\notin\pi_{N}(L)$
and we  reach a contradiction. Then $n=n_{1}+n_{2}$ for some
$n_{1}\in \pi_{N}(L)$ and some $0\neq n_{2}\in \pi_{N}(Q)$ and so
there exists a $q\in Q$ such that $\pi_{N}(q) = n_{2}$. Since
$Q\cap r_{M}(g^n)=0$, we have $g^n(q) = (f^n\oplus
0_{|_{P}})(q)\neq 0$. Due to $q = \pi_{N}(q)+ \pi_{P}(q)$ and
$g^n\pi_{P}(q) = ( f^n \oplus 0_{|_{P}})\pi_{P}(q) = 0$, we get
$f^n(q) = g^n(q) = f^n\pi_{N}(q)\neq 0$. This implies $n\notin
r_{N}(f^n)$ which is the required contradiction. Hence
$r_{N}(f^n)\leq \pi_{N}(L)$. Therefore $r_{N}(f^n)=\pi_{N}(L)$.
%This completes the proof.
\end{proof}

\begin{cor}\label{smndcor} Let $R$ be a
generalized right principally projective ring  with any idempotent
$e$ of $R$. Then $eR$ is a $\pi$-Rickart module.
 \end{cor}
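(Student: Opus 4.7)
The plan is to deduce this immediately by combining the two results already proved in this section, namely Remark \ref{son} and Proposition \ref{smnd}. The key observation is that for any idempotent $e \in R$, the cyclic right ideal $eR$ sits inside $R$ as a direct summand via the decomposition $R = eR \oplus (1-e)R$ of the regular right $R$-module.

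First, I would invoke Remark \ref{son}: since $R$ is assumed to be a generalized right principally projective ring, the right $R$-module $R$ is itself a $\pi$-Rickart module. Second, I would apply Proposition \ref{smnd}, which guarantees that every direct summand of a $\pi$-Rickart module is again $\pi$-Rickart. Since $eR$ is a direct summand of the $\pi$-Rickart module $R_R$, we conclude that $eR$ is a $\pi$-Rickart module.

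There is essentially no obstacle here; the corollary is a formal consequence of the two preceding results. The only point worth noting in the write-up is to make explicit that $R = eR \oplus (1-e)R$ as right $R$-modules, so that Proposition \ref{smnd} applies verbatim with $N = eR$ and $P = (1-e)R$. No computation with endomorphism rings or annihilators is needed at this level, since Proposition \ref{smnd} already handled the reduction from the endomorphism ring of the whole module to that of the summand.
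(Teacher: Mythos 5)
Your argument is correct and is exactly the intended derivation: the paper states this as an immediate corollary of Remark \ref{son} (which identifies generalized right principally projective rings with $\pi$-Rickart modules $R_R$) and Proposition \ref{smnd} (direct summands inherit the $\pi$-Rickart property), using the decomposition $R = eR \oplus (1-e)R$. Nothing is missing.
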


\begin{cor} Let $R = R_1\oplus R_2$ be a generalized right principally
projective ring with direct sum of the rings $R_1$ and $R_2$. Then
the rings $R_1$ and $R_2$ are generalized right principally
projective.
\end{cor}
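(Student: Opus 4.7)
The plan is to reduce the corollary to the already-established module-level machinery. By Remark \ref{son}, the conclusion is equivalent to asserting that each $R_i$ is a $\pi$-Rickart right $R_i$-module, and by symmetry I only need to handle $R_1$.

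First, I would take an arbitrary $x_1 \in R_1$ and lift it to $x = (x_1, 0) \in R$. A direct computation in the direct sum yields $x^n = (x_1^n, 0)$ for every $n \geq 1$, together with $r_R(x^n) = r_{R_1}(x_1^n) \oplus R_2$.

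Next, applying the hypothesis on $R$ to $x$, there exist a positive integer $n$ and an idempotent $g \in R$ with $r_R(x^n) = gR$. Writing $g = (g_1, g_2)$ componentwise (so that $g_i^2 = g_i \in R_i$), one has $gR = g_1 R_1 \oplus g_2 R_2$. Matching this with $r_{R_1}(x_1^n) \oplus R_2$ forces $g_2 = 1_{R_2}$ and $g_1 R_1 = r_{R_1}(x_1^n)$, so $R_1$ is generalized right principally projective; the same reasoning applies to $R_2$.

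The only place that needs a moment of care is the observation that an idempotent of $R_1 \oplus R_2$ must have idempotent components, which is immediate from $(g_1, g_2)^2 = (g_1^2, g_2^2)$. No serious obstacle is anticipated. Alternatively, one could invoke Corollary \ref{smndcor} with the central idempotent $e = (1_{R_1}, 0)$ to deduce that $eR \cong R_1$ is a $\pi$-Rickart right $R$-module, then use the centrality of $e$ to transfer the property to $R_1$ as a right $R_1$-module; I prefer the direct argument above since it avoids tracking the change of ring action.
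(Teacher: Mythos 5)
Your argument is correct, but it takes a different route from the one the paper intends. The paper derives this corollary from its module-theoretic machinery: $R_1$ and $R_2$ are the direct summands $eR$ and $(1-e)R$ of the right $R$-module $R$ for the central idempotent $e=(1_{R_1},0)$, so Proposition \ref{smnd} (every direct summand of a $\pi$-Rickart module is $\pi$-Rickart), packaged as Corollary \ref{smndcor}, makes each $R_i$ a $\pi$-Rickart $R$-module, and centrality of $e$ then identifies $\mathrm{End}_R(eR)$ with $eRe=R_1$ and the $R$-action with the $R_1$-action, giving the ring statement via Remark \ref{son} --- this is exactly the ``alternative'' you sketch and set aside at the end. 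Your primary argument instead works entirely inside the rings: lifting $x_1$ to $x=(x_1,0)$, computing $r_R(x^n)=r_{R_1}(x_1^n)\oplus R_2$ and $gR=g_1R_1\oplus g_2R_2$, and intersecting with $R_1$ to extract $r_{R_1}(x_1^n)=g_1R_1$ with $g_1^2=g_1\in R_1$. This is sound (the only cosmetic imprecision is that matching components directly gives $g_2R_2=R_2$; that $g_2=1_{R_2}$ then follows from idempotency, but neither fact is needed). What each approach buys: yours is shorter, self-contained, and avoids both the projection argument behind Proposition \ref{smnd} and the change-of-rings bookkeeping; the paper's is uniform with its framework and rests on a strictly stronger fact, namely that $eR$ is $\pi$-Rickart for an \emph{arbitrary} idempotent $e$, not just a central one arising from a ring decomposition.
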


We now characterize generalized right principally projective rings
in terms of  $\pi$-Rickart modules.
\begin{thm} Let $R$ be a ring. Then $R$ is generalized right principally projective if and
only if every cyclic projective $R$-module is  $\pi$-Rickart.
\end{thm}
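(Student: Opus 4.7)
The plan is straightforward: invoke Remark \ref{son}, which identifies $\pi$-Rickart $R$-modules (when the module is $R$ itself) with generalized right principally projective rings, and combine it with Corollary \ref{smndcor} on direct summands.

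For the sufficiency direction, I would note that $R$ is itself a cyclic projective $R$-module (generated by $1$). By hypothesis $R_R$ is $\pi$-Rickart, so by Remark \ref{son} the ring $R$ is generalized right principally projective.

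For the necessity direction, let $M$ be any cyclic projective right $R$-module. The standard structural fact about cyclic projective modules gives $M \cong eR$ for some idempotent $e \in R$: writing $M = xR$ and using projectivity to split the surjection $R \twoheadrightarrow M$ sending $1 \mapsto x$, the splitting picks out the required idempotent. Now apply Corollary \ref{smndcor}: since $R$ is generalized right principally projective, $eR$ is a $\pi$-Rickart $R$-module, hence so is $M$.

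There is essentially no obstacle here; the one point that must be spelled out is the reduction $M \cong eR$, which is the only nontrivial ingredient not already packaged by Remark \ref{son} and Corollary \ref{smndcor}. Both directions then collapse to a single invocation of those two results.
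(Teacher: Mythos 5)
Your proof is correct and follows essentially the same route as the paper: the paper also reduces a cyclic projective module to a direct summand of $R$ and then combines Remark \ref{son} with Proposition \ref{smnd} (of which Corollary \ref{smndcor} is just the packaged form you invoke). The only difference is that you spell out the splitting $M\cong eR$ explicitly, which the paper leaves implicit.
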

\begin{proof} The sufficiency is clear. For the necessity, let $M$ be a cyclic projective $R$-module.
Then $M\cong I$ for some direct summand right ideal $I$ of $R$. By
Remark \ref{son}, $R$ is $\pi$-Rickart as an $R$-module and by
Proposition \ref{smnd}, $I$ is $\pi$-Rickart,  and so is $M$.
\end{proof}

\begin{thm} Let $R$ be a ring and consider the following conditions.
\begin{enumerate}
    \item [{\rm (1)}] Every free $R$-module is
    $\pi$-Rickart.
    \item [{\rm (2)}] Every projective $R$-module is
    $\pi$-Rickart.
     \item [{\rm (3)}] Every flat $R$-module is
    $\pi$-Rickart.
\end{enumerate}
Then {\rm (3) $\Rightarrow$ (2) $\Leftrightarrow$ (1)}. Also {\rm
(2) $\Rightarrow$ (3)} holds for finitely presented modules.
\end{thm}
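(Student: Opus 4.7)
The plan is to reduce the entire statement to Proposition \ref{smnd} together with the standard relations between free, projective, and flat modules. No new tools beyond what has already been developed in the paper are needed.

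First I would handle the two trivial implications. The implication (3) $\Rightarrow$ (2) is immediate because every projective $R$-module is flat, so any projective module lies in the class hypothesized to be $\pi$-Rickart. Likewise (2) $\Rightarrow$ (1) is immediate because every free $R$-module is projective.

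For (1) $\Rightarrow$ (2) I would use the classical fact that a projective module is a direct summand of a free module. Given a projective $R$-module $P$, choose a free module $F$ with $F = P \oplus Q$. By (1), $F$ is $\pi$-Rickart, and then by Proposition \ref{smnd} the direct summand $P$ is also $\pi$-Rickart. This is the only place where the real content of the paper (Proposition \ref{smnd}) is used.

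Finally, for the statement that (2) $\Rightarrow$ (3) holds for finitely presented modules, I would invoke the classical theorem that a finitely presented flat module is projective (a consequence of, for instance, Lazard's characterization of flat modules, or a direct argument using a finite presentation and the lifting of the identity through a free cover). Thus if $M$ is finitely presented and flat, then $M$ is projective, and (2) yields that $M$ is $\pi$-Rickart. The only potential obstacle is verifying that Proposition \ref{smnd} applies in (1) $\Rightarrow$ (2) without any finiteness restriction on the summand decomposition, but the proposition is stated for arbitrary direct summands, so this goes through unchanged.
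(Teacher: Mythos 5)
Your proposal is correct and follows exactly the same route as the paper: the trivial implications from the inclusions flat $\supseteq$ projective $\supseteq$ free, Proposition \ref{smnd} applied to a projective module as a direct summand of a free module for (1) $\Rightarrow$ (2), and the fact that finitely presented flat modules are projective for the last claim. No differences worth noting.
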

\begin{proof} (3) $\Rightarrow$  (2) $\Rightarrow $ (1) Clear. (1) $\Rightarrow $ (2)
Let $M$ be a projective $R$-module. Then $M$ is a direct summand
of a free $R$-module $F$. By (1), $F$ is $\pi$-Rickart, and so is
$M$ due to Proposition \ref{smnd}.

(2) $\Rightarrow$ (3) is clear from the fact that finitely
presented flat modules are projective.
\end{proof}

\begin{lem}\label{abel} Let $M$ be a  module  and $f\in S$. If $r_M(f^n)=eM$ for some central
idempotent $e \in S$ and  a positive integer $n$, then
 $r_M(f^{n+1})=eM$.
\end{lem}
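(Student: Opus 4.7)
The plan is to prove the two inclusions $eM \subseteq r_M(f^{n+1})$ and $r_M(f^{n+1}) \subseteq eM$ separately, exploiting both the centrality and the idempotency of $e$ along with the hypothesis $r_M(f^n) = eM$.

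First I would dispatch the easy inclusion: since $r_M(f^n) = eM$ we have $f^n(em) = 0$ for every $m \in M$, hence $f^{n+1}(em) = f(f^n(em)) = 0$, giving $eM \subseteq r_M(f^{n+1})$.

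For the reverse inclusion, take any $m \in r_M(f^{n+1})$. Then $f^n(fm) = f^{n+1}m = 0$, so $fm \in r_M(f^n) = eM$ and therefore $e(fm) = fm$. Using the centrality of $e$, this rewrites as $f(em) = fm$, i.e., $f((1-e)m) = 0$. Hence $(1-e)m \in r_M(f) \subseteq r_M(f^n) = eM$, so we can write $(1-e)m = ey$ for some $y \in M$. Applying $1-e$ to both sides and invoking $(1-e)e = 0$ (which follows since $e$ is a central idempotent) yields $(1-e)m = (1-e)^2 m = (1-e)(ey) = 0$, so $m = em \in eM$.

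The step that requires the most care is the passage $fm \in eM \Rightarrow f((1-e)m) = 0$, which is precisely where the centrality hypothesis on $e$ enters; without it, $e$ and $f$ would not commute and the argument would break. Everything else is a routine manipulation of the idempotent decomposition $M = eM \oplus (1-e)M$.
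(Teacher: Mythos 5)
Your proof is correct and follows essentially the same route as the paper's: both reduce to showing $m\in r_M(f^{n+1})$ forces $fm=efm=fem$ via centrality of $e$. The only (immaterial) difference is the finish—the paper computes directly that $f^nm=f^{n-1}(fem)=f^n(em)=0$ so $m\in r_M(f^n)=eM$, whereas you observe $(1-e)m\in r_M(f)\subseteq eM\cap(1-e)M=0$; both are fine.
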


\begin{proof} It is clear that $r_M(f^n)\leq r_M(f^{n+1})$. For the reverse inclusion,
let $m\in r_M(f^{n+1})$. Then $fm\in r_M(f^n)=eM$, and so
$fm=efm$. Since $e$ is central,
 $f^nm=f^{n-1}fm=f^{n-1}efm=f^{n-1}fem=f^nem=0$. Hence $m\in
r_M(f^n)$ and so $r_M(f^{n+1})\leq r_M(f^n)$.
\end{proof}

The next example reveals that a direct sum of $\pi$-Rickart
modules need not be $\pi$-Rickart.

\begin{ex}\label{orn3}{\rm Let $R$ denote the ring $\left[\begin{array}{ll}\Bbb
Z&\Bbb Z\\ 0&\Bbb Z\end{array}\right]$ and $M$ the right $R$-module$\left [\begin{array}{cc}\Bbb Z & \Bbb Z\\\Bbb Z & \Bbb Z\end{array}\right]$. Consider the submodules $N=\left[\begin{array}{ll}\Bbb Z&\Bbb Z\\
0&0\end{array}\right]$ and $K = \left[\begin{array}{ll}0 &0 \\
\Bbb Z & \Bbb Z\end{array}\right]$ of $M$. It is easy to check
that every nonzero endomorphism of $N$ and $K$ is a monomorphism.
Therefore $N$ and $K$ are $\pi$-Rickart modules but, as was
claimed in Example \ref {orn2}, $M = N\oplus K$ is not
$\pi$-Rickart.}
\end{ex}
A ring $R$ is called {\it abelian} if every idempotent is central,
that is, $ae=ea$ for any $a, e^2=e  \in R$. A module $M$ is called
{\it abelian} \cite{Ro} if $fem = efm$ for any $f\in S$, $e^2 =
e\in S$, $m\in M$. Note that $M$ is an abelian module if and only
if $S$ is an abelian ring.  In \cite[Proposition 7]{HKL}, it is
shown that  the class of abelian generalized right principally
projective rings is closed under direct sums. We extend this
result as follows.

\begin{prop} Let $M_1$ and $M_2$ be $\pi$-Rickart $R$-modules. If
$M_1$ and $M_2$ are abelian  and Hom$_R(M_i, M_j)=0$ for $i\neq
j$, then $M=M_1\oplus M_2$ is a $\pi$-Rickart module.
\end{prop}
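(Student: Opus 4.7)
The plan is to exploit the vanishing of cross-homomorphisms to split the endomorphism ring of $M$ as a direct product $S \cong S_1 \times S_2$, where $S_i = \mathrm{End}_R(M_i)$. Indeed, since $\mathrm{Hom}_R(M_i, M_j) = 0$ for $i \neq j$, any endomorphism $f$ of $M = M_1 \oplus M_2$ restricts to an endomorphism of each summand and has no ``off-diagonal'' components, so $f = f_1 \oplus f_2$ with $f_i \in S_i$. Consequently $f^n = f_1^n \oplus f_2^n$ for every positive integer $n$, and $r_M(f^n) = r_{M_1}(f_1^n) \oplus r_{M_2}(f_2^n)$.

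Next, I would use the $\pi$-Rickart hypothesis on each summand to produce, for $i = 1,2$, a positive integer $n_i$ and an idempotent $e_i \in S_i$ with $r_{M_i}(f_i^{n_i}) = e_i M_i$. The abelian hypothesis on $M_i$ is what lets the two levels $n_1, n_2$ be reconciled: since $S_i$ is abelian, $e_i$ is central in $S_i$, so Lemma \ref{abel} applies and $r_{M_i}(f_i^{n_i}) = r_{M_i}(f_i^{n_i+1}) = r_{M_i}(f_i^{n_i+2}) = \cdots$, with the \emph{same} idempotent $e_i$ generating each annihilator. Taking $n = \max\{n_1, n_2\}$, we therefore obtain $r_{M_i}(f_i^{n}) = e_i M_i$ for both $i$ simultaneously.

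Finally, set $e := e_1 \oplus e_2 \in S$; this is an idempotent of $S$ under the product decomposition. Combining the two displays above yields
\[
r_M(f^n) = r_{M_1}(f_1^n) \oplus r_{M_2}(f_2^n) = e_1 M_1 \oplus e_2 M_2 = eM,
\]
which establishes the $\pi$-Rickart condition for $M$.

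The only genuinely delicate point is the exponent-matching step: without the abelian hypothesis one could not promote a generator $e_i$ of $r_{M_i}(f_i^{n_i})$ to a generator of $r_{M_i}(f_i^{n})$ for a larger $n$, and there would be no natural common idempotent to use in $S$. Everything else is a routine bookkeeping consequence of the product decomposition of $S$.
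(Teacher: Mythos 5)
Your proposal is correct and follows essentially the same route as the paper: decompose $S$ as a product of $S_1$ and $S_2$ using the vanishing of cross-homomorphisms, and use the abelian hypothesis together with Lemma \ref{abel} to raise both exponents to a common $n$ while keeping the same idempotents. The paper merely phrases the exponent-matching as a three-way case analysis ($n=m$, $n<m$, $m<n$) where you take $n=\max\{n_1,n_2\}$ directly; the substance is identical.
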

\begin{proof} Let $S_i=$ End$_R(M_i)$ for $i=1, 2$ and  $S=$
End$_R(M)$. We may write $S$ as $ \left[\begin{array}{cc}S_1 &
0\\0 & S_2\end{array}\right]$. Let $\left[\begin{array}{cc}f_1 &
0\\0 & f_2\end{array}\right]\in S$ with $f_1\in S_1$ and $f_2\in
S_2$. Then there exist positive integers $n, m$ and $e_1^2=e_1\in
S_1$, $e_2^2=e_2\in S_2$ with
 $r_{M_1}(f_1^n)=e_1M_1$ and $r_{M_2}(f_2^m)=e_2M_2$. Consider the following cases:

 ({\it i}) If  $n=m$, then  obviously  $r_M \left (\left[\begin{array}{cc}f_1 &
0\\0 & f_2\end{array}\right]^n\right )=\left[\begin{array}{cc}e_1
& 0\\0 & e_2\end{array}\right]M$.

({\it ii}) If $n<m$, then by Lemma \ref{abel}, we have
$r_{M_1}(f_1^n)=r_{M_1}(f_1^m)=e_1M_1$. Thus
$\left[\begin{array}{cc}f_1 & 0\\0 & f_2\end{array}\right]^m
\left[\begin{array}{cc}e_1 & 0\\0 & e_2\end{array}\right]=0$, and
so $\left[\begin{array}{cc}e_1 & 0\\0 & e_2\end{array}\right]M\leq
r_M\left( \left[\begin{array}{cc}f_1 & 0\\0 &
f_2\end{array}\right]^m \right)$. Now let $\left[\begin{array}{c}
m_1\\ m_2\end{array}\right]\in r_M\left(
\left[\begin{array}{cc}f_1 & 0\\0 & f_2\end{array}\right]^m
\right)$. Then $m_1\in r_{M_1}(f_1^m)=e_1M_1$ and $m_2\in
r_{M_2}(f_2^m)=e_2M_2$.  Hence $\left[\begin{array}{c} m_1\\
m_2\end{array}\right]= \left[\begin{array}{cc}e_1 & 0\\0 &
e_2\end{array}\right]\left[\begin{array}{c} m_1\\
m_2\end{array}\right]$. Thus $\left[\begin{array}{c} m_1\\
m_2\end{array}\right]\in \left[\begin{array}{cc}e_1 & 0\\0 &
e_2\end{array}\right]M$. Therefore $r_M\left(
\left[\begin{array}{cc}f_1 & 0\\0 & f_2\end{array}\right]^m
\right)\leq \left[\begin{array}{cc}e_1 & 0\\0 &
e_2\end{array}\right]M$.

({\it iii})  If $m<n$, then  the proof is similar to case (ii),
since $M_2$ is abelian.
 \end{proof}

Recall that a  module $M$ is called {\it duo} if every submodule
of $M$ is fully invariant, i.e., for a submodule $N$ of $M$,
$f(N)\leq N$ for each $f\in S$. Our next aim is to find some
conditions under which a fully invariant submodule of a
$\pi$-Rickart module is also $\pi$-Rickart.

\begin{lem}\label{duo1} Let $M$ be a module  and $N$ a fully invariant submodule of $M$. If $M$ is
$\pi$-Rickart  and every endomorphism of $N$ can be extended to an
endomorphism of $M$, then $N$ is $\pi$-Rickart.
\end{lem}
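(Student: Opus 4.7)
The plan is to take an arbitrary $g\in S_N := \operatorname{End}_R(N)$, lift it to an endomorphism of $M$ using the extension hypothesis, apply the $\pi$-Rickart property of $M$, and then restrict the resulting idempotent back to $N$ using full invariance.

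More concretely, given $g\in S_N$, let $f\in S$ be an extension of $g$, so $f|_N=g$ and hence $f^n|_N=g^n$ for every $n\ge 1$. Since $M$ is $\pi$-Rickart, choose an idempotent $e^2=e\in S$ and a positive integer $n$ with $r_M(f^n)=eM$. The first observation is the identity
\begin{equation*}
r_N(g^n)=r_M(f^n)\cap N = eM\cap N,
\end{equation*}
which is immediate from $f^n|_N=g^n$. The second observation is that, because $N$ is fully invariant in $M$, both $f$ and $e$ carry $N$ into $N$; in particular $e':=e|_N$ is a well-defined idempotent in $S_N$.

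The key remaining step is to identify $eM\cap N$ with $e'N=eN$. One inclusion is trivial: $eN\subseteq eM$ and $eN\subseteq N$ by full invariance. For the reverse, if $x\in eM\cap N$, then $x=em$ for some $m\in M$, so $ex=e^2m=em=x$; combined with $x\in N$ this gives $x=ex\in eN$. Hence $r_N(g^n)=eN=e'N$ with $e'\in S_N$ idempotent, which is exactly the $\pi$-Rickart condition for $N$.

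I do not expect a serious obstacle here; the proof is essentially a bookkeeping argument. The only delicate point is making sure that full invariance is invoked in two distinct places — once to guarantee $f(N)\subseteq N$ so that the restriction $g^n=f^n|_N$ is legitimate, and once to guarantee $e(N)\subseteq N$ so that $e|_N$ is an endomorphism of $N$ (without which the equality $eM\cap N=eN$ would fail). The extension hypothesis is used solely at the beginning to produce $f$; everything afterward is restriction.
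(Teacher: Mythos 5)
Your proof is correct and follows exactly the same route as the paper's: extend the endomorphism to $M$, apply the $\pi$-Rickart property there, and use full invariance to identify $r_N(g^n)=eM\cap N$ with $eN$; you merely spell out the step $eM\cap N=eN$ that the paper leaves implicit. (One small quibble with your closing remark: full invariance is not actually needed to get $f(N)\subseteq N$, since $f|_N=g\in\operatorname{End}_R(N)$ already forces $f(N)=g(N)\subseteq N$; its only essential use is for $e(N)\subseteq N$.)
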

\begin{proof} Let $S=$
End$_R(M)$ and $f\in $ End$_R(N)$. By hypothesis, there exists
$g\in S$ such that  $g|_N=f$ and being $M$ $\pi$-Rickart, there
exist a positive integer $n$ and an idempotent $e$ of $S$ such
that $r_M(g^n)=eM$. Then $r_N(f^n)=N\cap r_M(g^n)$. Since $N$ is
fully invariant, we have $r_N(f^n)=eN$, and so $r_N(f^n)$  is a
direct summand of $N$. Therefore $N$ is $\pi$-Rickart.
\end{proof}

The following result is an immediate consequence of Lemma
\ref{duo1}.

\begin{prop} Let $M$ be a quasi-injective module and $E(M)$ denote the injective
hull of $M$. If $E(M)$ is $\pi$-Rickart, then so is $M$.
\end{prop}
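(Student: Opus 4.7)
The plan is to realize $M$ as a fully invariant submodule of $E(M)$ and invoke Lemma \ref{duo1}. Concretely, I would aim to verify the two hypotheses of that lemma: (a) $M$ is fully invariant in $E(M)$, and (b) every endomorphism of $M$ extends to an endomorphism of $E(M)$.

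For (a), I would appeal to the classical Johnson--Wong characterization of quasi-injectivity: a module $M$ is quasi-injective if and only if $M$ is fully invariant in its injective hull, i.e., $h(M)\subseteq M$ for every $h\in \operatorname{End}_R(E(M))$. Thus the quasi-injectivity hypothesis gives (a) for free.

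For (b), given any $f\in \operatorname{End}_R(M)$, I would compose it with the inclusion $M\hookrightarrow E(M)$ to get a map $M\to E(M)$, and then use the injectivity of $E(M)$ to extend this along the inclusion $M\hookrightarrow E(M)$ to some $g\in \operatorname{End}_R(E(M))$ with $g|_M=f$.

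With (a) and (b) in hand, Lemma \ref{duo1} applies to the pair $(E(M),M)$ and yields that $M$ is $\pi$-Rickart. There is no real obstacle here; the only subtlety is citing (or quickly justifying) the Johnson--Wong characterization to confirm full invariance, after which everything else is a routine injectivity-extension argument.
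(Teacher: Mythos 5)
Your proof is correct and is exactly the argument the paper intends: the paper states this proposition as an immediate consequence of Lemma \ref{duo1}, applied to the pair $(E(M),M)$, with full invariance coming from the Johnson--Wong characterization of quasi-injectivity and the extension property coming from the injectivity of $E(M)$. You have simply made explicit the two hypothesis-checks that the paper leaves unstated.
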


\begin{thm}\label{duo2} Let $M$ be a quasi-injective duo module.  If $M$ is $\pi$-Rickart, then every submodule of $M$ is
$\pi$-Rickart.
\end{thm}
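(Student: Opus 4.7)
The plan is to apply Lemma \ref{duo1} directly: once we verify its two hypotheses for an arbitrary submodule $N$ of $M$, the conclusion follows immediately.

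First I would fix a submodule $N \le M$ and check that $N$ is fully invariant in $M$. This is immediate from the assumption that $M$ is duo, since by definition every submodule of a duo module is fully invariant under the action of $S = \mathrm{End}_R(M)$.

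Next I would verify the extension property for endomorphisms of $N$. Let $f \in \mathrm{End}_R(N)$ and compose with the inclusion $\iota : N \hookrightarrow M$ to obtain an $R$-homomorphism $\iota \circ f : N \to M$. Since $M$ is quasi-injective (equivalently, $M$-injective as a right $R$-module), any homomorphism from a submodule of $M$ into $M$ extends to an endomorphism of $M$. Hence there exists $g \in S$ with $g|_N = f$. This provides the second hypothesis of Lemma \ref{duo1}.

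With both conditions in hand, Lemma \ref{duo1} yields that $N$ is $\pi$-Rickart, and since $N$ was an arbitrary submodule of $M$, the conclusion follows. No genuine obstacle arises in this argument; the proof is essentially a packaging of the two structural hypotheses (duo and quasi-injective) into the two extendability/invariance conditions required by Lemma \ref{duo1}.
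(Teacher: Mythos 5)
Your proof is correct and follows essentially the same route as the paper: use the duo hypothesis to get full invariance of $N$, use quasi-injectivity to extend any $f \in \mathrm{End}_R(N)$ to some $g \in S$, and then invoke Lemma \ref{duo1}. Nothing further is needed.
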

\begin{proof} Let $M$ be a $\pi$-Rickart module and $N$ a submodule of $M$  and  $f\in$ End$_R(N)$. By quasi-injectivity of $M$, $f$ extends to an endomorphism $g$ of $M$.
Then $r_M(g^n) = eM$ for some positive integer $n$ and $e^2 = e\in
S$. Since $N$ is fully invariant under $g$, the proof follows from
Lemma \ref{duo1}.
\end{proof}

Rizvi and Roman \cite{TR} introduced that the module $M$
is ${\mathcal K}$-{\it nonsingular} if for any $ f \in S$,
$r_M(f)$ is essential in $M$ implies $f=0$. They proved that every
Rickart module is ${\mathcal K}$-nonsingular. In order to obtain a
similar result for $\pi$-Rickart modules,  we now give a
generalization of the notion of ${\mathcal K}$-nonsingularity. The
module $M$ is called {\it generalized} ${\mathcal K}$-{\it
nonsingular}, if $r_M(f)$ is essential in $M$ for any $f \in S$,
then $f$ is nilpotent. It is clear that every ${\mathcal
K}$-nonsingular module is generalized ${\mathcal K}$-nonsingular.
The converse holds if the module is rigid. A ring $R$ is called
{\it $\pi$-regular} if for each $a\in R$ there exist a positive
integer $n$ and an element $x$ in $R$ such that $a^n=a^nxa^n$.

\begin{lem}\label{prgk}  Let $M$ be a  module. If $S$ is a $\pi$-regular ring, then $M$ is
 generalized ${\mathcal K}$-nonsingular.
\end{lem}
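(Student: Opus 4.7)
My plan is to take $f \in S$ with $r_M(f)$ essential in $M$, and show that the $\pi$-regularity of $S$ forces $f$ to be nilpotent. The key observation is that $\pi$-regularity produces, from $f$, an idempotent whose kernel contains $r_M(f^n)$, and then the essentiality of $r_M(f^n)$ collapses the idempotent to zero.

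First I would note that since $r_M(f) \subseteq r_M(f^n)$ for every positive integer $n$, the hypothesis implies that $r_M(f^n)$ is essential in $M$ for all $n$. Now, because $S$ is $\pi$-regular, there exist a positive integer $n$ and an element $x \in S$ such that $f^n = f^n x f^n$. Set $e = x f^n$; a direct check gives $e^2 = x f^n x f^n = x f^n = e$, so $e$ is an idempotent of $S$.

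Next, I would observe that $r_M(f^n) \subseteq r_M(e)$, because $e m = x f^n m = 0$ whenever $f^n m = 0$. Since $e^2 = e$, one has the decomposition $M = eM \oplus (1-e)M$ with $r_M(e) = (1-e)M$. Therefore $(1-e)M$ contains the essential submodule $r_M(f^n)$ and is itself essential in $M$. But an essential direct summand of $M$ must equal $M$, forcing $eM = 0$, i.e.\ $e = x f^n = 0$. Substituting into $f^n = f^n x f^n$ yields $f^n = 0$, so $f$ is nilpotent, as required.

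I do not expect any real obstacle: the only subtle point is the identification $r_M(e) = (1-e)M$ for the endomorphism idempotent $e$, together with the elementary fact that an essential direct summand is the whole module. Both are standard and need no further machinery.
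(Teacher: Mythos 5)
Your proof is correct and follows essentially the same route as the paper: both extract the idempotent $xf^n$ (the paper calls it $gf^n$) from the $\pi$-regularity equation $f^n = f^n x f^n$, identify $r_M(f^n)$ with a direct summand containing the essential submodule $r_M(f)$, and conclude that the idempotent vanishes, forcing $f^n=0$. No gaps.
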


\begin{proof} Let $f \in S$ with $r_M(f)$  essential in $M$.
By hypothesis, there exist a positive integer $n$ and $g\in S$
such that $f^n=f^ngf^n$. Then  $gf^n$ is an idempotent of $S$ and
so $r_M(f^n)$ is a direct summand of $M$. Since $r_M(f)$ is
essential in $M$, $r_M(f^n)$ is also essential  in $M$. Hence
$r_M(gf^n)=M$ and so  $gf^n=0$. Therefore $f^ngf^n=f^n=0$.
\end{proof}

\begin{prop}\label{nonsin} Every $\pi$-Rickart module is generalized
${\mathcal K}$-nonsingular.
\end{prop}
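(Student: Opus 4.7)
The plan is to mimic the strategy that makes every Rickart module $\mathcal{K}$-nonsingular, but at the $n$-th power of $f$ instead of at $f$ itself. Start with an arbitrary $f\in S$ such that $r_M(f)$ is essential in $M$; the goal is to show $f^n=0$ for some positive integer $n$. Apply the $\pi$-Rickart hypothesis to $f$ to obtain an idempotent $e\in S$ and a positive integer $n$ with $r_M(f^n)=eM$.

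Next I would push the essentiality upward from $f$ to $f^n$. The inclusion $r_M(f)\subseteq r_M(f^n)$ is immediate (anything killed by $f$ is killed by $f^n$), so since $r_M(f)$ is essential in $M$, the larger submodule $r_M(f^n)=eM$ is essential in $M$ as well. Now I exploit the direct sum decomposition $M=eM\oplus(1-e)M$: the summand $(1-e)M$ meets the essential submodule $eM$ in zero, and an essential submodule can only have zero intersection with the zero submodule, forcing $(1-e)M=0$. Hence $e=1$ and $r_M(f^n)=M$, which means $f^n=0$, proving that $f$ is nilpotent.

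There is essentially no obstacle here beyond the ``essential direct summand is the whole module'' observation; the substantive content is bundled into the $\pi$-Rickart hypothesis which supplies the decomposition $r_M(f^n)=eM$. Note that this proof properly generalizes Rizvi–Roman's argument: when $n$ can be taken equal to $1$ (the Rickart case) the same reasoning yields $f=0$, recovering $\mathcal{K}$-nonsingularity.
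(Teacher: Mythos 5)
Your argument is correct and is essentially identical to the paper's proof: both pass from the essentiality of $r_M(f)$ to that of $r_M(f^n)=eM$ via the inclusion $r_M(f)\subseteq r_M(f^n)$, and then conclude $eM=M$ because an essential direct summand must be all of $M$, giving $f^n=0$. The only difference is that you spell out the ``essential direct summand equals $M$'' step, which the paper leaves implicit.
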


\begin{proof} Let $M$ be a $\pi$-Rickart module  and $f \in S$ with $r_M(f)$ essential in $M$.
Then  $r_M(f^n)=eM$ for some $e^2=e\in S$ and a positive integer
$n$. Hence $r_M(f^n)$ is essential in $M$. Thus $r_M(f^n)=M$ and
so $f^n=0$.
\end{proof}

\begin{cor} If $R$ is a generalized right principally projective ring,
then $R$ is  generalized ${\mathcal K}$-nonsingular as an
$R$-module.
\end{cor}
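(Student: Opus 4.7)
The plan is to observe that this corollary follows immediately by combining two results already established in the excerpt, so essentially no new work is required. The argument is a one-step deduction: translate the ring-theoretic hypothesis into a module-theoretic one, then apply the proposition about modules.

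More precisely, first I would invoke Remark \ref{son}, which asserts that $R$ is a generalized right principally projective ring if and only if $R$, viewed as a right module over itself, is a $\pi$-Rickart module. So under the hypothesis of the corollary, $R_R$ is a $\pi$-Rickart module. Next, I would apply Proposition \ref{nonsin} to $M = R_R$, which tells us that every $\pi$-Rickart module is generalized $\mathcal{K}$-nonsingular. Concatenating these two implications yields exactly the conclusion that $R$ is generalized $\mathcal{K}$-nonsingular as an $R$-module.

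There is no real obstacle here since the corollary is a direct specialization of Proposition \ref{nonsin} to the module $M = R_R$, with Remark \ref{son} serving as the bridge between the ring-theoretic terminology (generalized right principally projective) and the module-theoretic terminology ($\pi$-Rickart). The only thing worth emphasizing in the write-up is that the endomorphism ring $S = \operatorname{End}_R(R_R)$ is canonically isomorphic to $R$ acting by left multiplication, so that the condition ``$r_M(f)$ is essential in $M$ implies $f$ is nilpotent'' for $M = R_R$ coincides with the appropriate ring-level nonsingularity condition, and no additional verification is required.
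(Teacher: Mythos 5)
Your proof is correct and is exactly the intended argument: the paper states this corollary without proof as an immediate consequence of Proposition \ref{nonsin}, using Remark \ref{son} to identify generalized right principally projective rings with $\pi$-Rickart modules $R_R$. Your added remark about $\operatorname{End}_R(R_R)\cong R$ is a harmless and sensible clarification.
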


Our next purpose is to find out the conditions when a
$\pi$-Rickart module $M$ is torsion-free as an $S$-module. So we
consider the set $T(_SM)=\{m\in M\mid fm=0$ for some nonzero $f\in
S\}$ of all torsion elements of a module $M$ with respect to $S$.
The subset $T(_SM)$ of $M$ need not be a submodule of the modules
$_SM$ and $M_R$ in general. If $S$ is a commutative domain, then
$T(_SM)$ is an $(S, R)$-submodule of $M$.

\begin{prop} Let $M$ be a module with a commutative domain $S$.
If $M$ is  $\pi$-Rickart, then $T(_SM)=0$ and every nonzero
element of $S$ is a  monomorphism.
\end{prop}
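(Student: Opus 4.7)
The plan is to observe that since $S$ is a commutative domain, the only idempotents in $S$ are $0$ and $1$, which severely restricts the conclusion of the $\pi$-Rickart condition. I will then apply this restriction directly to an arbitrary nonzero $f\in S$.

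First I would take any nonzero $f\in S$. Since $M$ is $\pi$-Rickart, there exist an idempotent $e\in S$ and a positive integer $n$ with $r_M(f^n)=eM$. Because $S$ is a commutative domain, $e\in\{0,1\}$. I would rule out $e=1$: indeed, $e=1$ would give $r_M(f^n)=M$, i.e., $f^n=0$, and since $S$ is a domain this forces $f=0$, contradicting our choice of $f$. Therefore $e=0$, so $r_M(f^n)=0$.

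Next, using the standard inclusion $r_M(f)\subseteq r_M(f^n)$ (if $fm=0$ then $f^nm=f^{n-1}(fm)=0$), I conclude $r_M(f)=0$, so $f$ is a monomorphism. This proves the second assertion.

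Finally, for the first assertion, suppose $m\in T(_SM)$, so $fm=0$ for some nonzero $f\in S$. By the previous paragraph $f$ is a monomorphism, hence $m=0$, giving $T(_SM)=0$. There is no real obstacle here; the whole argument is driven by the fact that a commutative domain has no nontrivial idempotents, which collapses the $\pi$-Rickart dichotomy to the two extreme cases.
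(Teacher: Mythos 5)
Your proof is correct and follows essentially the same route as the paper's: both use the domain hypothesis to force the idempotent $e$ with $r_M(f^n)=eM$ to be zero (the paper via $f^ne=0$, you via the triviality of idempotents plus ruling out $e=1$), then deduce that $f$ is a monomorphism and hence $T(_SM)=0$.
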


\begin{proof} Let $0\neq f\in S$. Then there exist a positive integer $n$ and  $e^2=e\in
S$ such that $r_M(f^n) = eM$. Hence $f^ne= 0$. Since $S$ is a
domain, we have $e = 0$ and so $r_M(f^n) = 0$. This implies that
Ker$f=0$. Thus $f$ is a monomorphism. On the other hand,  if $m\in
T(_SM)$ there exists $0\neq f\in S$ such that $fm=0$. Being $f$  a
monomorphism, we have $m=0$, and so $T(_SM)=0$.
\end{proof}

We close this section with the relations among strongly Hopfian
modules, Fitting modules and $\pi$-Rickart modules. Recall that a
module $M$ is called {\it Hopfian} if every surjective
endomorphism of M is an automorphism, while  $M$ is called {\it
strongly Hopfian} \cite{HKS}  if for any endomorphism $f$ of $M$
the ascending chain Ker$f \subseteq$ Ker$f^2 \subseteq \cdots
\subseteq $  Ker$f^n \subseteq \cdots $ ~~ stabilizes.  We now
give a relation between abelian and strongly Hopfian modules by
using $\pi$-Rickart modules.

\begin{cor}\label{sthop} Every abelian $\pi$-Rickart module is strongly
Hopfian.
\end{cor}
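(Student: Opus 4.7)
The plan is to use Lemma \ref{abel} directly: the essential observation is that for an endomorphism $f$ of $M$, the right annihilator $r_M(f^n)$ is simply the kernel $\ker f^n$, so the $\pi$-Rickart hypothesis supplies an index $n$ after which the chain of kernels is already pinned down by an idempotent, and abelianness forces that idempotent to be central.

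More concretely, I would start by fixing an arbitrary $f \in S$. Since $M$ is $\pi$-Rickart, there exist a positive integer $n$ and an idempotent $e \in S$ with $r_M(f^n) = eM$, i.e.\ $\ker f^n = eM$. Because $M$ is abelian, every idempotent of $S$ is central, so $e$ is central. This is precisely the hypothesis of Lemma \ref{abel}, which gives $r_M(f^{n+1}) = eM = r_M(f^n)$.

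Now I would iterate: an easy induction on $k \geq 0$ using Lemma \ref{abel} (reapplying it to the central idempotent $e$ satisfying $r_M(f^{n+k}) = eM$) shows that $r_M(f^{n+k}) = eM$ for all $k \geq 0$. Translating back to kernels, $\ker f^n = \ker f^{n+1} = \ker f^{n+2} = \cdots$, so the ascending chain $\ker f \subseteq \ker f^2 \subseteq \cdots$ stabilizes at step $n$. Since $f$ was arbitrary, $M$ is strongly Hopfian.

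There is no real obstacle here; the only thing to be careful about is the logical order — one must first invoke $\pi$-Rickartness to reach the regime where a central idempotent controls the annihilator, and only then can Lemma \ref{abel} be bootstrapped to propagate stability along the whole tail of the chain. Without the abelian hypothesis, the idempotent $e$ need not be central and Lemma \ref{abel} fails to apply, so abelianness is genuinely used.
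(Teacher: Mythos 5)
Your proof is correct and follows essentially the same route as the paper: the paper's proof simply cites Lemma \ref{abel} together with a proposition from the strongly Hopfian reference, and you have spelled out the same argument explicitly (centrality of $e$ from abelianness, then iterating Lemma \ref{abel} to stabilize the kernel chain).
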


\begin{proof} It follows from Lemma \ref{abel} and
\cite[Proposition 2.5]{HKS}.
\end{proof}

A module  $M$ is said to be {\it a Fitting module} \cite{HKS} if
for any $f \in S$, there exists an integer $n \geq 1$ such that
$M=$ Ker$f^n\oplus $ Im$f^n$. A ring $R$ is called {\it strongly
$\pi$-regular} if for every element $a$ of $R$ there exist a
positive  integer $n$ (depending on $a$) and an element $x$ of $R$
such that $a^n = a^{n+1} x$, equivalently, an element $y$ of $R$
such that $a^n = ya^{n+1}$.  Due to Armendariz, Fisher and Snider
\cite{AFS}, the module $M$ is a Fitting module if and only if $S$
is a strongly $\pi$-regular ring. In this direction we have the
following result.

\begin{cor}\label{fgric} Every Fitting module is a $\pi$-Rickart
module.
\end{cor}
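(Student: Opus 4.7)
The plan is to unpack the definitions and observe that the Fitting decomposition already hands us the idempotent we need. Recall that for $f\in S$ and any positive integer $n$, by definition of the annihilator one has $r_M(f^n)=\{m\in M\mid f^n m=0\}=\mathrm{Ker}\,f^n$. So the $\pi$-Rickart condition for $f$ amounts to exhibiting $e^2=e\in S$ and $n\geq 1$ with $\mathrm{Ker}\,f^n=eM$.

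Given a Fitting module $M$ and an endomorphism $f\in S$, I would invoke the Fitting property to produce an integer $n\geq 1$ with $M=\mathrm{Ker}\,f^n\oplus\mathrm{Im}\,f^n$. Let $e\colon M\to M$ be the projection onto $\mathrm{Ker}\,f^n$ along $\mathrm{Im}\,f^n$; this is an $R$-module endomorphism of $M$ (since both summands are $R$-submodules), so $e\in S$, and clearly $e^2=e$ with $eM=\mathrm{Ker}\,f^n$. Combining with the identification above yields $r_M(f^n)=eM$, which is exactly the $\pi$-Rickart condition for $f$. Since $f$ was arbitrary, $M$ is $\pi$-Rickart.

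There is essentially no obstacle here: the proof is a one-line consequence of the Fitting decomposition together with the observation $r_M(f^n)=\mathrm{Ker}\,f^n$. If one prefers, one can instead cite the characterization of Fitting modules via strong $\pi$-regularity of $S$ attributed to Armendariz--Fisher--Snider: given $f\in S$, pick $n$ and $x\in S$ with $f^n=f^{n+1}x$, so that $e:=1-xf^n$ (or $1-f^nx$, after adjusting sides) is an idempotent whose image coincides with $\mathrm{Ker}\,f^n$. Either route produces the required idempotent, so the corollary follows directly.
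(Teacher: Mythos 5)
Your proof is correct and is precisely the argument the paper intends: the corollary is stated without a written proof because the Fitting decomposition $M=\mathrm{Ker}\,f^{n}\oplus \mathrm{Im}\,f^{n}$ immediately identifies $r_M(f^{n})=\mathrm{Ker}\,f^{n}$ with $eM$ for the projection idempotent $e\in S$ associated to that decomposition. One small caution on your alternative route only: $1-xf^{n}$ need not be idempotent for an arbitrary $x$ with $f^{n}=f^{n+1}x$ unless $x$ is chosen to commute with $f$ (which strongly $\pi$-regular rings permit, but this deserves a word of justification); your primary argument requires no such care and stands on its own.
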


\begin{cor} Let $R$ be a ring and let $n$ be a positive integer. If the
matrix ring $M_n(R)$ is strongly $\pi$-regular, then $R^n$ is a
$\pi$-Rickart $R$-module.
\end{cor}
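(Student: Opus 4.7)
The plan is to reduce the statement directly to the Fitting-module corollary just proved. The key observation is the standard identification $S = \operatorname{End}_R(R^n) \cong M_n(R)$: an endomorphism of the right $R$-module $R^n$ is completely determined by an $n \times n$ matrix of entries in $R$ acting on column vectors from the left. Under this identification, the strong $\pi$-regularity hypothesis on $M_n(R)$ translates immediately to strong $\pi$-regularity of the endomorphism ring $S$ of $M = R^n$.

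Next I would invoke the Armendariz--Fisher--Snider criterion cited in the paragraph preceding Corollary \ref{fgric}: a module $M$ is a Fitting module if and only if its endomorphism ring $S$ is strongly $\pi$-regular. Since we have just seen that $S \cong M_n(R)$ is strongly $\pi$-regular, the module $M = R^n$ is a Fitting module.

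Finally, by Corollary \ref{fgric}, every Fitting module is $\pi$-Rickart, so $R^n$ is $\pi$-Rickart. There is essentially no obstacle here; the only point that deserves attention is making the isomorphism $\operatorname{End}_R(R^n) \cong M_n(R)$ explicit, but this is entirely standard and needs no calculation beyond naming it.
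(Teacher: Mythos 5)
Your proposal is correct and follows essentially the same route as the paper: both reduce to showing $R^n$ is a Fitting module and then apply Corollary \ref{fgric}. The only cosmetic difference is that you justify the Fitting property via the Armendariz--Fisher--Snider criterion together with the identification $\operatorname{End}_R(R^n)\cong M_n(R)$, whereas the paper cites \cite[Corollary 3.6]{HKS} directly, which amounts to the same thing.
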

\begin{proof} Let $M_n(R)$ be a strongly $\pi$-regular ring. Then
by \cite[Corollay 3.6]{HKS}, $R^n$ is a Fitting $R$-module and so
it is $\pi$-Rickart.
\end{proof}

The following provides another source of examples for
$\pi$-Rickart modules.
\begin{prop}\label{artin}
Every finitely generated module over a right Artinian ring is
$\pi$-Rickart.
\end{prop}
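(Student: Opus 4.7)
The plan is to reduce to Fitting's Lemma. If $R$ is right Artinian, then by the Hopkins--Levitzki theorem $R$ is also right Noetherian, so any finitely generated right $R$-module $M$ is simultaneously Artinian and Noetherian, hence has finite composition length. For such a module, Fitting's Lemma guarantees that for each $f \in S$ there exists a positive integer $n$ with $M = \mathrm{Ker}(f^n) \oplus \mathrm{Im}(f^n)$.

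From this decomposition the $\pi$-Rickart condition follows immediately. I would let $e \in S$ be the projection onto $\mathrm{Ker}(f^n)$ along $\mathrm{Im}(f^n)$; then $e^2 = e$ and $eM = \mathrm{Ker}(f^n) = r_M(f^n)$, which is exactly what is required. Alternatively, this is precisely the statement that $M$ is a Fitting module (in the sense recalled just before Corollary \ref{fgric}), so one can invoke Corollary \ref{fgric} directly to conclude that $M$ is $\pi$-Rickart.

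I do not expect any serious obstacle: the only nontrivial ingredient is Hopkins--Levitzki, which is a standard tool, and everything else is bookkeeping. The shortest write-up is therefore: finite length $\Rightarrow$ Fitting $\Rightarrow$ $\pi$-Rickart via Corollary \ref{fgric}.
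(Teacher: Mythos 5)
Your argument is essentially identical to the paper's proof: the paper also notes that $M$ is Artinian and Noetherian (hence of finite length), concludes that $M$ is a Fitting module, and invokes Corollary \ref{fgric}. You merely make explicit two steps the paper leaves implicit, namely the use of Hopkins--Levitzki to get the Noetherian condition and the construction of the idempotent from the Fitting decomposition; both are correct.
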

\begin{proof} Let $R$ be a right Artinian ring and $M$ a finitely
generated $R$-module. Then $M$ is an Artinian and Noetherian
module. Hence $M$ is a Fitting module. Thus Corollary \ref{fgric}
completes the proof.
\end{proof}

\section{The Endomorphism Ring of a $\pi$-Rickart Module}

In this section we study some relations between $\pi$-Rickart
modules and their endomorphism rings. We prove that endomorphism
ring of a $\pi$-Rickart module is always  generalized right
principally projective, the converse holds either the module is
flat over its endomorphism ring or it is $1$-epiretractable. Also
modules whose endomorphism rings are $\pi$-regular are
characterized.

\begin{lem}\label{grpp1}
If $M$ is a $\pi$-Rickart module, then $S$ is a generalized right
principally projective ring.
\end{lem}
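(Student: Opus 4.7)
The plan is to translate the $\pi$-Rickart condition on $M$ (which controls right annihilators in $M$) into the generalized right principally projective condition on $S$ (which controls right annihilators in $S$) by a direct double-inclusion argument for the \emph{same} idempotent $e$ and the \emph{same} power $n$ supplied by the hypothesis. This mirrors the classical argument that a Rickart module has a right Rickart endomorphism ring; the only new twist is that the equalities happen at the level of $f^n$ rather than $f$.

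I would fix an arbitrary $f\in S$ and use the $\pi$-Rickart hypothesis to produce a positive integer $n$ and an idempotent $e=e^2\in S$ with $r_M(f^n)=eM$. The goal is then to prove $r_S(f^n)=eS$, which is precisely what the generalized right principally projective condition asks for at $f^n\in S$.

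For the inclusion $eS\subseteq r_S(f^n)$, I would first note that $e(M)\subseteq eM=r_M(f^n)$, so $f^n e=0$ as an element of $S$. Consequently, for any $g=eh\in eS$, we have $f^n g=(f^n e)h=0$, so $g\in r_S(f^n)$. For the reverse inclusion, take $g\in r_S(f^n)$, i.e., $f^n g=0$. Then for every $m\in M$, $f^n(g(m))=0$, so $g(m)\in r_M(f^n)=eM$. Since $e$ is idempotent and $g(m)\in eM$, we get $e(g(m))=g(m)$ for all $m\in M$, hence $eg=g$ in $S$, so $g=eg\in eS$. Combining the two inclusions yields $r_S(f^n)=eS$, and because $f\in S$ was arbitrary, $S$ is a generalized right principally projective ring.

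There is no real obstacle here; the argument is essentially formal and uses only the $(S,R)$-bimodule structure of $M$ together with the fact that the idempotent and the exponent supplied by the $\pi$-Rickart hypothesis transfer verbatim from $r_M(f^n)$ to $r_S(f^n)$. The only minor subtlety worth flagging is the quiet use of the identity $f^n e = 0$ in $S$ (which is derived from, rather than identical to, the hypothesis $r_M(f^n)=eM$), since this is the step that converts a containment of submodules of $M$ into an equation in $S$.
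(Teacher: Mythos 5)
Your proof is correct and is essentially identical to the paper's: both fix $f$, take the $n$ and $e$ with $r_M(f^n)=eM$ from the $\pi$-Rickart hypothesis, and establish $r_S(f^n)=eS$ by the same two inclusions ($f^ne=0$ gives $eS\subseteq r_S(f^n)$; $f^ng=0$ forces $gM\subseteq eM$ and hence $g=eg$). No issues.
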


\begin{proof} If $f\in S$, then $r_M(f^n)=eM$ for some $e^2=e\in S$ and positive integer
$n$. If  $g\in r_S(f^n)$, then $gM \leq r_M(f^n)=eM$. This implies
that $g=eg\in eS$, and so $r_S(f^n)\leq eS$. Let $h\in S$. Due to
$f^nehM\leq f^neM=0$, we have $f^neh=0$. Hence $eS\leq r_S(f^n)$.
Therefore $r_S(f^n)=eS$.
\end{proof}

\begin{cor} Let $M$ be a module and $f\in
S$. If $r_M(f^n)$ is a direct summand of $M$ for some positive
integer $n$, then $f^nS$ is a projective right $S$-module.
\end{cor}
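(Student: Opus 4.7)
The plan is to reduce directly to the argument already carried out in Lemma \ref{grpp1} and then to exhibit $f^nS$ as a direct summand of the free right $S$-module $S_S$. First I would use the hypothesis that $r_M(f^n)$ is a direct summand of $M$ to pick an idempotent: write $M = r_M(f^n) \oplus N$ and let $e \in S$ be the projection onto $r_M(f^n)$ along $N$, so that $e^2 = e$ and $r_M(f^n) = eM$.

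Next I would repeat (or simply cite) the argument in the proof of Lemma \ref{grpp1}: for any $g \in S$, $g \in r_S(f^n)$ forces $gM \subseteq r_M(f^n) = eM$, hence $g = eg \in eS$; conversely, for any $h \in S$, $f^n(eh)M \subseteq f^n(eM) = 0$, so $eS \subseteq r_S(f^n)$. This gives $r_S(f^n) = eS$, which is a direct summand of $S_S$.

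Finally I would introduce the right $S$-module epimorphism $\varphi : S \to f^nS$ defined by $\varphi(g) = f^n g$, whose kernel is exactly $r_S(f^n) = eS$. The short exact sequence
\[
0 \longrightarrow eS \longrightarrow S \stackrel{\varphi}{\longrightarrow} f^nS \longrightarrow 0
\]
splits because $eS$ is a direct summand of $S$, so $f^nS \cong (1-e)S$ is a direct summand of the free right $S$-module $S_S$ and is therefore projective.

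There is no real obstacle here: the only mildly non-routine step is extracting the idempotent $e$ from the direct-summand hypothesis on $r_M(f^n)$, and once that is done the corollary is essentially a restatement of Lemma \ref{grpp1} combined with the standard fact that a cyclic module with direct-summand kernel of its canonical presentation is projective.
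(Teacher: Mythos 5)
Your proof is correct and follows exactly the route the paper intends: the corollary is stated immediately after Lemma \ref{grpp1} precisely because its argument (that $r_M(f^n)=eM$ forces $r_S(f^n)=eS$) carries over verbatim, and your final step identifying $f^nS\cong S/r_S(f^n)\cong (1-e)S$ as a direct summand of $S_S$ is the standard conclusion the paper leaves implicit. Nothing is missing.
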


The next result (see \cite[Proposition 9]{HKL}) is a consequence
of Theorem \ref{smnd} and Lemma \ref{grpp1}.

\begin{cor} If $R$ is a generalized right principally projective ring,
then  so is $eRe$ for any $e^2=e\in R$.
\end{cor}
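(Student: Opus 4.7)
The plan is to chain Remark~\ref{son}, Proposition~\ref{smnd}, and Lemma~\ref{grpp1}, using the standard identification $\mathrm{End}_R(eR)\cong eRe$.

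First, I would translate the hypothesis: by Remark~\ref{son}, saying $R$ is generalized right principally projective is the same as saying $R_R$ is a $\pi$-Rickart module. Next, for any idempotent $e^2=e\in R$, write $R_R=eR\oplus(1-e)R$, so $eR$ is a direct summand of the $\pi$-Rickart module $R_R$. Proposition~\ref{smnd} then gives that $eR$ is itself a $\pi$-Rickart right $R$-module.

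Now I would pass to endomorphism rings. Via the familiar isomorphism $\mathrm{End}_R(eR)\cong eRe$ (sending $\varphi\mapsto \varphi(e)$, with the converse given by left multiplication), the endomorphism ring of the $\pi$-Rickart module $eR$ is exactly $eRe$. Applying Lemma~\ref{grpp1} to $M=eR$ yields that $S=\mathrm{End}_R(eR)\cong eRe$ is generalized right principally projective, completing the proof.

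There is no real obstacle: the only thing to be careful about is stating the endomorphism ring identification correctly so that Lemma~\ref{grpp1} applies verbatim, and ensuring the direct summand decomposition $R=eR\oplus(1-e)R$ is interpreted as right $R$-modules so that Proposition~\ref{smnd} is applicable. Everything else is a direct citation.
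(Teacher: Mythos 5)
Your proof is correct and follows exactly the route the paper intends: the paper presents this corollary as "a consequence of Proposition \ref{smnd} and Lemma \ref{grpp1}," which is precisely your chain of Remark \ref{son}, the direct summand decomposition $R=eR\oplus(1-e)R$, and the identification $\mathrm{End}_R(eR)\cong eRe$. Nothing is missing.
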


A module $M$ is called {\it $n$-epiretractable} \cite{GV}
 if every $n$-generated submodule of $M$ is a homomorphic image of $M$.
We now show that $1$-epiretractable modules allow us to get the
converse of Lemma \ref{grpp1}.

\begin{prop}\label{grpp} Let $M$ be a  $1$-epiretractable module.
Then $M$ is  $\pi$-Rickart if and only if $S$ is a generalized
right principally projective ring.
\end{prop}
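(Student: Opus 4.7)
The forward direction is already supplied by Lemma~\ref{grpp1}, so the plan focuses on the converse: assuming $S$ is generalized right principally projective and $M$ is $1$-epiretractable, I would show $M$ is $\pi$-Rickart. Fix $f\in S$. By hypothesis on $S$, there exist an idempotent $e^2=e\in S$ and a positive integer $n$ such that $r_S(f^n)=eS$. The claim to establish is $r_M(f^n)=eM$.

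The inclusion $eM\subseteq r_M(f^n)$ is straightforward: since $e\in r_S(f^n)$ we have $f^n e=0$, so $f^n(em)=0$ for every $m\in M$. For the reverse inclusion, take $m\in r_M(f^n)$, so $f^n m=0$. The key step is to manufacture an element of $r_S(f^n)$ whose image contains $m$; this is exactly where $1$-epiretractability enters. The cyclic submodule $mR$ is $1$-generated, so there is a surjection $M\twoheadrightarrow mR$, and composing with the inclusion $mR\hookrightarrow M$ yields $g\in S$ with $g(M)=mR$. Because $f^n m=0$ and $f^n$ is $R$-linear, $f^n(mr)=0$ for every $r\in R$, hence $f^n g=0$, i.e.\ $g\in r_S(f^n)=eS$. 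Writing $g=eh$ with $h\in S$, and choosing $m'\in M$ with $g(m')=m$, we conclude $m=eh(m')\in eM$. Thus $r_M(f^n)=eM$, which proves $M$ is $\pi$-Rickart.

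The only non-routine point is the construction of the endomorphism $g$ with $g(M)=mR$: this is the mechanism that lifts information about the right annihilator of $f^n$ in $S$ back to the right annihilator of $f^n$ in $M$, and it fails in general without the $1$-epiretractability hypothesis. Everything else is a direct use of the idempotent generator supplied by the generalized right principal projectivity of $S$ and the fact that $r_S(f^n)$ already contains every endomorphism of $M$ whose image lies in $r_M(f^n)$.
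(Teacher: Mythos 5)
Your proof is correct and follows essentially the same route as the paper: use Lemma~\ref{grpp1} for necessity, and for sufficiency take $r_S(f^n)=eS$, use $1$-epiretractability to produce $g\in S$ with $gM=mR$ for $m\in r_M(f^n)$, deduce $f^ng=0$ so $g\in eS$, and conclude $m\in eM$. No gaps.
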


\begin{proof} The necessity holds from Lemma \ref{grpp1}. For the
sufficiency, let $f\in S$. Since $S$ is  generalized right
principally projective, there exist a positive integer $n$ and
$e^2=e\in S$ such that $r_S(f^n)=eS$. Then $f^ne=0$, and so
$eM\leq r_M(f^n)$. In order to show the reverse inclusion, let
$0\neq m\in r_M(f^n)$. Being $M$ $1$-epiretractable, there exists
$0\neq g\in S$ with $gM=mR$, and so $m=gm_1$ for some $m_1\in M$.
On the other hand,  $f^ngM=f^nmR=0$, and so $f^ng=0$. Thus $g\in
r_S(f^n)=eS$. It follows that $g=eg$. Hence we have
$m=gm_1=egm_1=em\in eM$. Therefore $r_M(f^n)=eM$.
\end{proof}

\begin{cor}\label{free} A free module is $\pi$-Rickart if and only if its endomorphism
ring is generalized right principally projective.
\end{cor}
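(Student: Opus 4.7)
The plan is to deduce this corollary directly from Proposition \ref{grpp}, which gives the equivalence for any $1$-epiretractable module. The necessity direction (that the endomorphism ring of a $\pi$-Rickart module is generalized right principally projective) is Lemma \ref{grpp1} and requires no freeness hypothesis, so only the converse needs genuine work. For the converse, the single thing I need to check is that every free module is $1$-epiretractable; then Proposition \ref{grpp} finishes the job.

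To verify $1$-epiretractability of a free module $F = \bigoplus_{i \in I} R$, let $xR$ be an arbitrary cyclic submodule of $F$. Fix some index $i_0 \in I$ and, using the universal property of the free basis, define an endomorphism $g \in \operatorname{End}_R(F)$ by sending the $i_0$-th basis element to $x$ and every other basis element to $0$. Then $gF = xR$, so $xR$ is indeed a homomorphic image of $F$ via an endomorphism, which is precisely the definition of $1$-epiretractability.

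With that established, the corollary follows: if $\operatorname{End}_R(F)$ is generalized right principally projective then Proposition \ref{grpp} applied to the $1$-epiretractable module $F$ yields that $F$ is $\pi$-Rickart, and the converse is Lemma \ref{grpp1}. No serious obstacle arises here; the only thing to watch is that the proof of $1$-epiretractability uses $I \neq \emptyset$, but the zero module is trivially $\pi$-Rickart (and its endomorphism ring is trivially generalized right principally projective), so this edge case causes no trouble.
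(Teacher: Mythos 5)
Your proposal is correct and matches the paper's intended argument: the corollary is stated as an immediate consequence of Proposition \ref{grpp} (together with Lemma \ref{grpp1} for the necessity), the only content being that a free module is $1$-epiretractable, which you verify exactly as one should by sending a chosen basis element to the generator of the cyclic submodule and the rest to zero.
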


A module $M$ is called {\it regular} (in the sense of Zelmanowitz
\cite{Ze}) if for any $m\in M$ there exists a right
$R$-homomorphism $M\stackrel{\phi}\rightarrow R$ such that
\linebreak $m = m\phi(m)$. Every cyclic submodule of a regular
module is a direct summand, and so it is $1$-epiretractable. Then
we have the following result.

\begin{cor}\label{regular} Let $M$ be a regular $R$-module. Then
 $S$ is generalized right principally projective
if and only if $M$ is  $\pi$-Rickart.
\end{cor}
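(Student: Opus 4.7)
The plan is that this corollary is essentially a one-line consequence of Proposition \ref{grpp}, once we observe that a regular module in the sense of Zelmanowitz is automatically $1$-epiretractable. This observation is already flagged in the paragraph immediately preceding the statement, so the only work lies in that step.

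First I would recall the witness of $1$-epiretractability: given a cyclic submodule $mR \leq M$ with $m \in M$, regularity of $M$ gives some $\phi \in \mathrm{Hom}_R(M,R)$ with $m\phi(m) = m$. Define $\psi \in S$ by $\psi(x) = m\phi(x)$. Using right $R$-linearity of $\phi$ together with $\phi(x) \in R$, a short check shows $\psi^2 = \psi$; moreover $\psi(m) = m$ and clearly $\psi(M) \subseteq mR$, so $\psi(M) = mR$. Thus $mR$ is the image of an endomorphism of $M$, and so $M$ surjects onto every cyclic submodule; i.e.\ $M$ is $1$-epiretractable.

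With that in hand, the corollary follows immediately from Proposition \ref{grpp}: $M$ is $\pi$-Rickart if and only if $S$ is generalized right principally projective. (The forward direction also follows directly from Lemma \ref{grpp1} without any regularity hypothesis.)

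The only subtlety — and really the only thing to be careful about — is verifying that $\psi$ is idempotent, since $\phi$ is a homomorphism to $R$ (not to $M$), so one must be attentive to where left versus right multiplication is happening when computing $\phi(m\phi(x)) = \phi(m)\phi(x)$. Once that is checked, no further obstacle remains.
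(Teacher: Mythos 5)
Your proposal is correct and follows the same route as the paper: the paper also deduces the corollary from Proposition \ref{grpp} via the observation (stated just before the corollary, without proof) that every cyclic submodule of a regular module is a direct summand, hence $M$ is $1$-epiretractable. Your explicit verification that $\psi(x)=m\phi(x)$ is an idempotent endomorphism with image $mR$ is exactly the standard argument behind that observation, and your computation $\phi(m\phi(x))=\phi(m)\phi(x)$ is the right justification.
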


\begin{cor}\label{fgprojreg} Every finitely generated projective regular module is
 $\pi$-Rickart.
\end{cor}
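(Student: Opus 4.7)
The plan is to deduce this from Corollary \ref{regular}: since $M$ is already regular by hypothesis, it is enough to show that the endomorphism ring $S$ is generalized right principally projective. I would in fact establish the stronger conclusion that $S$ is von Neumann regular, which is the classical result of Zelmanowitz for the endomorphism ring of a finitely generated projective regular module.

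Given $f \in S$, the key is to produce $g \in S$ with $fgf = f$. First, $f(M)$ is a finitely generated submodule of the regular module $M$, hence a direct summand of $M$; this is the standard characterization of regular modules as those in which every finitely generated submodule is a projective direct summand. Because $M$ is projective, $f(M)$ is then a projective direct summand, so the epimorphism $M \to f(M)$ induced by $f$ splits: I would write $M = \ker f \oplus K$ with $f|_K : K \to f(M)$ an isomorphism, together with a decomposition $M = f(M) \oplus L$ for some complement $L$. Define $g \in S$ to agree with $(f|_K)^{-1}$ on $f(M)$ and with $0$ on $L$; a direct check then gives $fgf = f$, so $S$ is von Neumann regular, hence right Rickart, and therefore generalized right principally projective. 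Corollary \ref{regular} now yields that $M$ is $\pi$-Rickart.

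The main technical point is showing that $f(M)$ is a direct summand of $M$: while a single cyclic submodule $mR$ of a regular module visibly splits off via the idempotent $y \mapsto m\phi(y)$ coming from the defining relation $m = m\phi(m)$, extending this to an arbitrary finitely generated submodule requires Zelmanowitz's inductive argument over a generating set, which uses finite generation crucially. Once that structural fact is in hand, the splitting of $\ker f$ and the construction of the pseudo-inverse $g$ are purely formal consequences of the projectivity of $M$.
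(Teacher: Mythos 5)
Your proposal is correct and follows essentially the same route as the paper: both reduce the statement to Corollary \ref{regular} by showing that $S$ is generalized right principally projective. The only difference is that the paper simply cites Ware's Theorem 3.6 for this fact about the endomorphism ring, whereas you prove the stronger statement that $S$ is von Neumann regular directly (via Zelmanowitz's characterization of regular modules and the splitting of $M \to f(M)$), which is a valid and self-contained substitute for the citation.
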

\begin{proof} Let $M$ be a finitely generated projective regular
module. By \cite[Theorem 3.6]{Wa}, the endomorphism ring of $M$ is
a generalized right principally projective ring. Hence, by
Corollary \ref{regular}, $M$ is $\pi$-Rickart.
\end{proof}

Let $\mathcal{U}$ be a nonempty set of $R$-modules. Recall that
for an $R$-module $L$, the submodule %\begin{center}
Tr$ (\mathcal{U},L) = \sum \{ Imh | h \in Hom(U,L), U \in
\mathcal{U}\}$ %\end{center}
is called the {\it trace of $\mathcal{U}$ in $L$.} If
$\mathcal{U}$ consists of a single module $U$ we simply write
Tr$(U,L)$.

The following result shows that the converse of Lemma \ref{grpp1}
is also true for flat modules over their endomorphism rings. On
the other hand, Theorem  \ref{wis} generalizes the result
\cite[39.10]{Wi1}.

\begin{thm}\label{wis} Let $M$ be an $R$-module and $f\in S$.
Then we have the following.
\begin{enumerate}
\item[(1)] If $f^nS$ is a projective right $S$-module for some positive integer  $n$, then Tr$(M, r_M(f^n))$ is a direct summand of $M$.
\item[(2)] If $M$ is a flat left  $S$-module and $S$ is a generalized right  principally projective ring, then
$M$ is  $\pi$-Rickart as an $R$-module.
\end{enumerate}
\end{thm}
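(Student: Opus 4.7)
The plan is to handle the two parts independently: part (1) follows from splitting a short exact sequence of right $S$-modules produced by the projectivity hypothesis, while part (2) uses flatness of ${}_SM$ to transfer the idempotent-generated right annihilator in $S$ to the corresponding direct summand in $M$.

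For (1), I would start from the short exact sequence of right $S$-modules
\[
0 \longrightarrow r_S(f^n) \longrightarrow S \xrightarrow{\;\lambda_{f^n}\;} f^nS \longrightarrow 0,
\]
where $\lambda_{f^n}$ denotes left multiplication by $f^n$. Projectivity of $f^nS$ splits this sequence, forcing $r_S(f^n)=eS$ for some idempotent $e\in S$. Next I would identify $\mathrm{Hom}_R(M, r_M(f^n))$ with the set of those $h\in S$ whose image lies in $r_M(f^n)$, i.e.\ with $\{h\in S : f^nh=0\} = r_S(f^n) = eS$. Therefore
\[
\mathrm{Tr}(M, r_M(f^n)) = \sum_{h\in eS} h(M) = (eS)M = eM,
\]
which is a direct summand of $M$.

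For (2), given $f\in S$, the hypothesis on $S$ yields an idempotent $e\in S$ and a positive integer $n$ with $r_S(f^n)=eS$. The inclusion $eM\subseteq r_M(f^n)$ is immediate from $f^ne=0$. For the reverse inclusion, I would apply $-\otimes_S M$ to the exact sequence of right $S$-modules
\[
0 \longrightarrow r_S(f^n) \longrightarrow S \xrightarrow{\;\lambda_{f^n}\;} S
\]
and use flatness of ${}_SM$ to preserve left exactness. Under the canonical isomorphism $S\otimes_S M \cong M$, the map $\lambda_{f^n}\otimes 1$ becomes left multiplication by $f^n$ on $M$, whose kernel is exactly $r_M(f^n)$. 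Exactness then identifies this kernel with the image of $r_S(f^n)\otimes_S M$ in $M$, which is $r_S(f^n)M = eSM = eM$. Hence $r_M(f^n)=eM$, so $M$ is $\pi$-Rickart.

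The main point to be careful about is the translation in part (2): one must verify that tensoring the right-$S$-module map $\lambda_{f^n}: S \to S$ with ${}_SM$ and using $S\otimes_S M\cong M$ indeed produces left multiplication by $f^n$ on $M$, so that flatness genuinely yields $r_M(f^n)=r_S(f^n)M$. Once this identification is in place, both conclusions follow by computing $eM$ directly from the relation $r_S(f^n)=eS$.
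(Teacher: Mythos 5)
Your proposal is correct. Part (1) is essentially the paper's argument: the splitting of $S \twoheadrightarrow f^nS$ gives $r_S(f^n)=eS$, and the identification of $\mathrm{Hom}_R(M,r_M(f^n))$ with $\{h\in S: f^nh=0\}=eS$ yields $\mathrm{Tr}(M,r_M(f^n))=eM$, exactly as in the paper.

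Part (2) reaches the same conclusion by a genuinely different (and more self-contained) route. The paper first reuses part (1) to get $\mathrm{Tr}(M,r_M(f^n))=eM$, then cites Wisbauer's \cite[15.9]{Wi1} to conclude that flatness of ${}_SM$ makes $r_M(f^n)$ an $M$-generated module, and \cite[13.5(2)]{Wi1} to identify an $M$-generated submodule with its own trace, so $r_M(f^n)=\mathrm{Tr}(M,r_M(f^n))=eM$. You instead tensor the exact sequence $0\to r_S(f^n)\to S\xrightarrow{\lambda_{f^n}} S$ with the flat module ${}_SM$ and read off $r_M(f^n)=r_S(f^n)M=eM$ directly. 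This is in effect an unwinding of the Wisbauer citations: the key content of \cite[15.9]{Wi1} is precisely that flatness forces the kernel of $f^n$ acting on $M$ to equal $r_S(f^n)M$, which is what your tensor computation establishes. Your version buys independence from the external references and makes transparent where flatness enters (exactness at the middle term, i.e.\ $KM$ filling up all of $\ker(f^n\cdot)$ rather than merely sitting inside it); the paper's version buys brevity and reuses part (1). The one point you flag --- that $\lambda_{f^n}\otimes 1$ corresponds to $m\mapsto f^nm$ under $S\otimes_S M\cong M$ --- is indeed the only translation that needs checking, and it is routine.
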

\begin{proof} (1) Assume that $f^nS$ is a projective right $S$-module for some
\linebreak positive integer  $n$. Then there exists $e^2 = e\in S$
with $r_S(f^n) = eS$. We show Tr$(M,r_M(f^n)) = eM$. Since $f^neM
= 0$, $eM\leq $ Tr$(M,r_M(f^n))$. Let \linebreak $g \in $
Hom$(M,r_M(f^n))$. Hence $gM \leq r_M(f^n)$ or $f^ngM = 0$ or
$f^ng = 0$. Thus  $g \in  r_S(f^n) = eS$ and so  $eg = g$. It
follows that $gM\leq egM\leq eM$ or
 Hom$(M,r_M(f^n))M\leq eM$.

(2) Assume that $M$ is a flat left $S$-module and $S$ is a
generalized right principally projective ring. If $f\in S$, then
$f^nS$ is a projective right $S$-module,  since $r_S(f^n) = eS$
for some positive integer $n$ and $e^2 = e\in S$. As in the proof
of (1), we have Tr$(M,r_M(f^n)) = eM$. Since $M$ is a flat left
$S$-module and $f^n\in S$,  $r_M(f^n)$ is $M$-generated by
\cite[15.9]{Wi1}. Again by \cite[13.5(2)]{Wi1}, Tr$(M,r_M(f^n)) =
r_M(f^n)$. Thus $r_M(f^n) = eM$.
\end{proof}

Recall that a ring $R$ is said to be \emph{von Neumann regular} if
for any $a \in R$ there exists $b \in R$ with $a = aba$. For a
module $M$, it is shown that if S is a von Neumann regular ring,
then $M$ is a Rickart module (see \cite[Theorem 3.17]{LRR}). We
obtain a similar result for $\pi$-Rickart modules.

\begin{lem}\label{pireg}  Let $M$ be a  module. If $S$ is a $\pi$-regular ring, then $M$ is a
$\pi$-Rickart module.
\end{lem}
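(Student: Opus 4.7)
The plan is to exploit $\pi$-regularity directly to produce the required idempotent, and then verify the annihilator equality by a straightforward double-inclusion argument.

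Fix an arbitrary $f \in S$. Since $S$ is $\pi$-regular, there exist a positive integer $n$ and $g \in S$ such that $f^n = f^n g f^n$. My candidate idempotent is $e = 1 - gf^n$. The first step is to check that $gf^n$ is an idempotent of $S$: indeed $(gf^n)(gf^n) = g(f^n g f^n) = gf^n$, and consequently $e = 1 - gf^n$ satisfies $e^2 = e \in S$.

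Next I would establish $r_M(f^n) = eM$ by proving both inclusions. For $eM \subseteq r_M(f^n)$, take any $em' \in eM$; then $f^n(em') = (f^n - f^n g f^n) m' = 0$ by the choice of $g$, so $em' \in r_M(f^n)$. For $r_M(f^n) \subseteq eM$, let $m \in r_M(f^n)$; then $gf^n m = 0$, hence $m = (1 - gf^n)m = em \in eM$. Combining the two inclusions yields $r_M(f^n) = eM$, so $M$ is $\pi$-Rickart.

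There is no real obstacle here; the main content is the observation that $\pi$-regularity of $S$ lets us trade the abstract annihilator condition for the concrete witness $e = 1 - gf^n$, after which the verification is a purely mechanical annihilator computation. This same idea already appeared implicitly in Lemma~\ref{prgk}, where $gf^n$ was used as an idempotent to force $r_M(f^n)$ to be a direct summand; the present lemma is essentially the module-theoretic counterpart stated in terms of the $\pi$-Rickart property.
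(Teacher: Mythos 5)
Your proof is correct and is essentially identical to the paper's: both use $\pi$-regularity to write $f^n = f^n g f^n$, take $e = 1 - gf^n$ as the idempotent, and verify $r_M(f^n) = eM$ by the same two inclusions. No issues.
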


\begin{proof}  Let $f\in S$. Since $S$ is $\pi$-regular,
there exist a positive integer $n$ and an element $g$ in $S$ such
that $f^n=f^ngf^n$. Then  $gf^n$ is an idempotent of $S$. Now we
show that $r_M(f^n)=(1-gf^n)M$. For $m\in M$,  we have
$f^n(1-gf^n)m=(f^n-f^ngf^n)m=(f^n-f^n)m=0$. Hence $(1-gf^n)M\leq
r_M(f^n)$. For the other side, if $m\in r_M(f^n)$, then $gf^nm=0$.
This implies that $m=(1-gf^n)m\in (1-gf^n)M$. Therefore
$r_M(f^n)=(1-gf^n)M$.
\end{proof}

Now we  recall some known facts that will be needed about
$\pi$-regular rings.
\begin{lem}\label{dorsey} Let $R$ be a ring. Then
\begin{enumerate}
    \item[{\rm (1)}] If $R$ is $\pi$-regular, then $eRe$ is also
    $\pi$-regular  for any $e^2=e\in R$.
    \item[{\rm (2)}] If $M_n(R)$ is $\pi$-regular for any positive integer
    $n$, then so is $R$.
    \item[{\rm (3)}] If $R$ is a commutative ring, then $R$ is $\pi$-regular
    if and only if $M_n(R)$ is $\pi$-regular for any positive integer
    $n$.
\end{enumerate}
\end{lem}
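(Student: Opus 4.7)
My plan is to treat the three parts separately, relying on the explicit $\pi$-regular identity $a^n = a^n x a^n$ and on functorial properties of matrix rings and corners. The statement is folklore, so the proofs should be short verifications rather than deep arguments.

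For (1), I would start with an arbitrary $a \in eRe$ and use $\pi$-regularity of $R$ to get $n \geq 1$ and $x \in R$ with $a^n = a^n x a^n$. Since $a \in eRe$ we have $ea = ae = a$, and consequently $a^n e = e a^n = a^n$. Setting $y = exe \in eRe$ then gives
\[
a^n y a^n \;=\; a^n (exe) a^n \;=\; (a^n e)\, x\, (e a^n) \;=\; a^n x a^n \;=\; a^n,
\]
which witnesses that $eRe$ is $\pi$-regular.

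For (2), given $a \in R$, I would lift it to the scalar matrix $aI_n \in M_n(R)$ and apply $\pi$-regularity of $M_n(R)$ to produce an integer $m$ and a matrix $X = (x_{ij})$ with $(aI_n)^m = (aI_n)^m X (aI_n)^m$. Reading off the $(1,1)$ entry simply gives $a^m = a^m x_{11} a^m$, so $R$ itself is $\pi$-regular with $x_{11}$ as the witness.

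Part (3) is the genuine content: the $(\Leftarrow)$ direction is immediate from (2), while $(\Rightarrow)$ requires a structural argument. The approach I would take is to use the standard fact that for a commutative $\pi$-regular ring $R$, the nilradical $N=\mathrm{Nil}(R)$ is nil and the quotient $R/N$ is von Neumann regular. Then $M_n(R)/M_n(N) \cong M_n(R/N)$ is a matrix ring over a commutative von Neumann regular ring, hence itself von Neumann regular (this is a classical fact, e.g.\ via Fitting's lemma applied entrywise, or from the equivalence between regularity and semi-hereditariness plus semiprimeness in the commutative base case). To conclude $\pi$-regularity of $M_n(R)$, given $A \in M_n(R)$ I would lift a von Neumann regular pseudo-inverse $\overline B$ of $\overline A$ to some $B \in M_n(R)$ and observe that $A - ABA \in M_n(N)$; since every element of $M_n(N)$ over a commutative nil ring has a nilpotent power, an appropriate power of $A - ABA$ vanishes, and a short manipulation upgrades $A = ABA + (\text{nilpotent})$ into an identity of the form $A^k = A^k Y A^k$ for some $Y$ and some $k$. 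The main obstacle here is verifying that $M_n(N)$ is nil for commutative nil $N$, which is needed to push the von Neumann regular lift back up; this uses commutativity of $N$ in an essential way and is precisely where the hypothesis ``$R$ commutative'' enters.
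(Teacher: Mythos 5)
Your parts (1) and (2) are correct: (1) is essentially the paper's own argument (the paper writes the witness as $ere$, you write $exe$), and for (2) your scalar-matrix computation is a valid alternative to the paper's one-line deduction from (1) via the corner $e_{11}M_n(R)e_{11}\cong R$. The issue is in the forward direction of (3).

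The gap is your final step: ``a short manipulation upgrades $A = ABA + (\text{nilpotent})$ into an identity of the form $A^k = A^k Y A^k$.'' No such short manipulation exists in the noncommutative ring $M_n(R)$, and this step is precisely where all the content of the statement lives. What you are invoking is the assertion that if $J$ is a nil ideal of a ring $T$ and $T/J$ is von Neumann regular (or $\pi$-regular), then $T$ is $\pi$-regular. In a \emph{commutative} ring this really is a manipulation: from $c=a(1-ba)$ and $c^m=0$ one gets $a^m(1-ba)^m=0$, hence $a^m=a^{m}\bigl(1-(1-ba)^m\bigr)=a^{m+1}w$ with everything commuting, and regularity of a power follows. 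But $M_n(R)$ is not commutative, the factors $ABA$ and $C=A-ABA$ do not commute, and expanding $A^m=(ABA+C)^m$ leaves cross terms such as $C(ABA)^{m-1}$ that contain only one factor from $M_n(N)$ and are not killed by nilpotence (or even local nilpotence) of $M_n(N)$. So the step needs either a genuine proof or a citation to the lifting theorem for $\pi$-regularity modulo nil ideals; as written it is asserted, not proved. You also misplace the difficulty: the nilness of $M_n(N)$, which you flag as ``the main obstacle,'' is in fact the easy part, since the nilradical of a commutative ring is locally nilpotent and matrices over a nilpotent ideal are nilpotent.

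For comparison, the paper avoids this lifting problem entirely: from commutativity and $\pi$-regularity it deduces that every prime ideal of $R$ is maximal (Lam, Ex.~4.15), hence every finitely generated $R$-module is co-Hopfian (Vasconcelos), and then the Armendariz--Fisher--Snider theorem gives that $M_n(R)=\mathrm{End}_R(R^n)$ is strongly $\pi$-regular, hence $\pi$-regular. If you want to keep your reduction modulo the nilradical, you must either supply the nil-lifting argument for $\pi$-regularity in full or replace it with a reference; otherwise the route through zero-dimensionality and Fitting's lemma is the safer one.
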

\begin{proof} (1) Let $R$ be a $\pi$-regular ring, $e^2=e\in R$ and  $a\in
eRe$. Then $a^n=a^nra^n$ for some positive integer $n$ and $r\in
R$. Since $a^n=a^ne=ea^n$, we have $a^n=a^n(ere)a^n$. Therefore
$eRe$ is $\pi$-regular.

(2) is clear from (1).

(3) Let $R$ be a commutative $\pi$-regular ring. By
\cite[Ex.4.15]{L1}, every prime ideal of $R$ is maximal, and so
every finitely generated $R$-module is co-Hopfian from \cite{V}.
Then for any positive integer $n$, $M_n(R)$ is $\pi$-regular by
\cite[Theorem 1.1]{AFS}. The rest is known from (2).
\end{proof}

\begin{prop}\label{fgproj}
Let $R$ be a commutative $\pi$-regular ring. Then every finitely
generated projective $R$-module is $\pi$-Rickart.
\end{prop}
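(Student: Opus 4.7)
The plan is to reduce the statement to showing that the endomorphism ring $S = \mathrm{End}_R(M)$ is $\pi$-regular; once that is established, Lemma \ref{pireg} immediately delivers the conclusion that $M$ is $\pi$-Rickart.

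To verify $\pi$-regularity of $S$, I would exploit the standard structure of finitely generated projective modules. Since $M$ is finitely generated projective over $R$, there is a positive integer $n$ and an idempotent $e \in M_n(R)$ such that $M \cong eR^n$, whence $S \cong e M_n(R) e$. Now the hypotheses combine cleanly with Lemma \ref{dorsey}: part (3) of that lemma gives that $M_n(R)$ is $\pi$-regular (here commutativity of $R$ is essential, since it is what permits the passage from $R$ being $\pi$-regular to $M_n(R)$ being $\pi$-regular), and then part (1) of the same lemma says that the corner ring $e M_n(R) e$ is $\pi$-regular. Thus $S$ is $\pi$-regular.

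Applying Lemma \ref{pireg} to this $S$ concludes the argument. There is no real obstacle beyond correctly identifying $S$ with a corner of a matrix ring; all of the substantive content has already been packaged into Lemmas \ref{pireg} and \ref{dorsey}, so the proof is essentially a two-line chain of citations once the structural identification of $S$ is recorded.
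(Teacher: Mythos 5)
Your proof is correct and follows the paper's own argument essentially verbatim: both identify $S\cong eM_n(R)e$, use Lemma \ref{dorsey} (parts (3) and (1)) to transfer $\pi$-regularity from $R$ to $M_n(R)$ to the corner ring, and then conclude via Lemma \ref{pireg}. No differences worth noting.
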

\begin{proof} Let $M$ be a finitely generated  projective $R$-module. So the endomorphism ring of
$M$ is $eM_n(R)e$ with some positive integer $n$ and an idempotent
$e$ in $M_n(R)$. Since $R$ is commutative  $\pi$-regular, $M_n(R)$
is also  $\pi$-regular, and so is $eM_n(R)e$ by Lemma
\ref{dorsey}. Hence $M$ is $\pi$-Rickart  by Lemma \ref{pireg}.
\end{proof}

The converse of Lemma \ref{pireg} may not be true in general, as
the following example shows.

\begin{ex}{\rm Consider $\Bbb Z$ as a $\Bbb Z$-module. Then it can be easily shown that
$\Bbb Z$ is a $\pi$-Rickart module, but its endomorphism ring is
not  $\pi$-regular.}
\end{ex}

 A module $M$ has {\it $C_2$ condition}
if any submodule $N$ of $M$ which is isomorphic to a direct
summand of $M$ is a direct summand. In \cite[Theorem 3.17]{LRR},
it is proven that the module $M$  is Rickart with $C_2$ condition
if and only if  $S$ is von Neumann regular. The $C_2$ condition
allows us to show  the converse of Lemma \ref{pireg}.

\begin{thm}\label{C2}  Let $M$ be a  module with $C_2$ condition. Then $M$ is $\pi$-Rickart if and
only if $S$ is a $\pi$-regular ring.
\end{thm}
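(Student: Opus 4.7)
The plan is to observe that the $(\Leftarrow)$ direction is already Lemma \ref{pireg} and requires no use of the $C_2$ hypothesis, so the real content is the forward direction. So I fix $f\in S$ and aim to produce a $g\in S$ with $f^n g f^n = f^n$ for some positive integer $n$.

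First I would use $\pi$-Rickartness to pick $n\geq 1$ and an idempotent $e\in S$ with $r_M(f^n) = eM$. This gives the direct sum decomposition $M = eM\oplus(1-e)M$ with $\ker f^n = eM$. Consequently $f^n|_{(1-e)M}\colon (1-e)M \to f^n M$ is a module isomorphism (it is injective because its kernel is contained in $eM\cap(1-e)M=0$, and clearly surjective onto $f^n M$ since $f^n(eM)=0$).

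The crucial step is to invoke the $C_2$ hypothesis: since $f^n M \cong (1-e)M$ and $(1-e)M$ is a direct summand of $M$, $C_2$ forces $f^n M$ itself to be a direct summand of $M$. Write $M = f^n M\oplus K$. Let $\varphi\colon f^n M \to (1-e)M$ denote the inverse of $f^n|_{(1-e)M}$, and define $g\in S$ by $g|_{f^n M} = \iota\circ\varphi$, where $\iota\colon(1-e)M\hookrightarrow M$ is the inclusion, and $g|_K = 0$. Then for any $m\in M$, writing $f^n m \in f^n M$, we have $g(f^n m)=\varphi(f^n m)\in (1-e)M$ and $f^n(\varphi(f^n m)) = f^n m$ by construction, so $f^n g f^n = f^n$. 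This shows $S$ is $\pi$-regular.

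The main obstacle is really just the construction of $g$: without $C_2$ there is no reason for $f^n M$ to split off $M$, and hence no clean way to extend the inverse of $f^n|_{(1-e)M}$ to a global endomorphism of $M$. The $C_2$ condition is precisely what lets us transport the isomorphism $(1-e)M \cong f^n M$ into a splitting of $f^n M$ from $M$, which is exactly the ingredient needed to build $g$.
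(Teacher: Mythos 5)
Your proposal is correct and follows essentially the same route as the paper: decompose $M=\mathrm{Ker}\,f^n\oplus N$ using $\pi$-Rickartness, note $f^n|_N$ is monic so $C_2$ makes $f^nM=f^nN$ a direct summand, and extend the inverse of $f^n|_N$ by zero on a complement to get $g$ with $f^ngf^n=f^n$. You merely spell out the construction of $g$ more explicitly than the paper does.
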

\begin{proof}  The sufficiency holds from Lemma \ref{pireg}. For
the necessity, let $0\neq f\in S$. Since $M$ is $\pi$-Rickart,
Ker$f^n$ is a direct summand of $M$ for some positive integer $n$.
Let $M=$ Ker$f^n\oplus N$ for some $N\leq M$. It is clear that
$f^n|_N$ is a monomorphism. By the $C_2$ condition, $f^nN$ is a
direct summand of $M$. On the other hand, there exists $0\neq g\in
S$ such that $gf^n|_N=1_N$. Hence
$(f^n-f^ngf^n)M=(f^n-f^ngf^n)$(Ker$f^n\oplus N)=(f^n-f^ngf^n)N=0$.
Thus $f^n=f^ngf^n$, and so $S$ is a $\pi$-regular ring.
\end{proof}

The following is a  consequence of Proposition \ref{fgproj} and
Theorem \ref{C2}.
\begin{cor} Let $R$ be a commutative ring and satisfy $C_2$
condition. Then the following are equivalent.\begin{enumerate}
    \item [{\rm (1)}] $R$ is a $\pi$-regular ring.
    \item [{\rm (2)}] Every finitely generated projective
    $R$-module is $\pi$-Rickart.
\end{enumerate}
\end{cor}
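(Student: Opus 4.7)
The corollary is set up perfectly so that both directions essentially reduce to previously established results applied with $M=R$. I would organize the plan as follows.

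For $(1)\Rightarrow(2)$, I would simply invoke Proposition \ref{fgproj} verbatim: once we assume $R$ is commutative and $\pi$-regular, that proposition already gives $\pi$-Rickartness of every finitely generated projective $R$-module, with no use of the $C_2$ hypothesis needed at this step.

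For $(2)\Rightarrow(1)$, the plan is to specialize (2) to the module $M=R_R$ itself. Since $R$ is cyclic and free, it is certainly a finitely generated projective $R$-module, hence by (2) it is $\pi$-Rickart. Next I would identify the endomorphism ring: the canonical map $R\to \mathrm{End}(R_R)$, $r\mapsto \ell_r$ (left multiplication), is a ring isomorphism, so $S=\mathrm{End}_R(R)\cong R$. By hypothesis $R$ (as an $R$-module) satisfies the $C_2$ condition, so Theorem \ref{C2} applies and yields that $S$ is a $\pi$-regular ring. Transporting through the isomorphism $S\cong R$ gives that $R$ itself is $\pi$-regular, which is (1).

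There is no real obstacle here; the only thing to be careful about is the identification $\mathrm{End}(R_R)\cong R$ and making sure the $C_2$ hypothesis on $R$ is being used in the sense of the module $R_R$, which is exactly the setting required by Theorem \ref{C2}. Because both implications are one-line appeals to named results already in the paper, I would keep the write-up very short, essentially two sentences per direction, rather than repeating the arguments in Proposition \ref{fgproj} or Theorem \ref{C2}.
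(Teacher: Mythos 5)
Your proposal is correct and matches the paper's intended argument exactly: the paper states the corollary as "a consequence of Proposition \ref{fgproj} and Theorem \ref{C2}," which is precisely your two reductions, with $(1)\Rightarrow(2)$ from Proposition \ref{fgproj} and $(2)\Rightarrow(1)$ from applying Theorem \ref{C2} to $M=R_R$ via the identification $\mathrm{End}_R(R)\cong R$. Nothing further is needed.
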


As every quasi-injective module has $C_2$ condition, we have the
following.

\begin{cor}\label{prgd} Let $M$ be a quasi-injective module. Then M is $\pi$-Rickart if and only if $S$ is a
$\pi$-regular ring.
\end{cor}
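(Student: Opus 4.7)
The proof is essentially a one-step deduction from Theorem \ref{C2}, so the plan reduces to verifying the hypothesis of that theorem for quasi-injective modules. Concretely, the plan is to invoke the classical fact that every quasi-injective module satisfies the $C_2$ condition, and then appeal directly to Theorem \ref{C2}.

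First I would recall that $M$ being quasi-injective means every homomorphism from a submodule of $M$ into $M$ extends to an endomorphism of $M$. To establish $C_2$, I would start with a submodule $N \leq M$ and a direct summand $K$ of $M$, say $M = K \oplus K'$, together with an isomorphism $\phi \colon K \to N$. Applying quasi-injectivity to the map $\phi^{-1} \colon N \to K \subseteq M$, I would extend it to an endomorphism $\bar{\sigma} \in S$. Composing with the projection $\pi \colon M \to K$ and with $\phi$, the map $f = \phi \pi \bar{\sigma} \in S$ satisfies $f|_N = \mathrm{id}_N$, so $f$ is an idempotent in $S$ with image $N$. Hence $N = f(M)$ is a direct summand of $M$, which is exactly the $C_2$ condition.

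Having verified $C_2$ for the quasi-injective module $M$, the corollary follows immediately: by Theorem \ref{C2}, $M$ is $\pi$-Rickart if and only if $S$ is a $\pi$-regular ring.

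There is no real obstacle here; the only mildly non-trivial point is the passage from quasi-injectivity to $C_2$, and that is a standard and short argument using the extension property in the direction described above. Everything else is a direct citation of Theorem \ref{C2}.
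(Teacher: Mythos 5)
Your proof is correct and follows exactly the paper's route: the paper simply remarks that every quasi-injective module satisfies the $C_2$ condition and then cites Theorem \ref{C2}. The only difference is that you write out the standard extension argument for quasi-injective $\Rightarrow$ $C_2$ (which is carried out correctly), whereas the paper takes it as known.
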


\begin{cor} Every right self-injective ring is generalized right principally
projective if and only if it is $\pi$-regular.
\end{cor}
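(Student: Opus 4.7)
The plan is to reduce the statement directly to Corollary~\ref{prgd} by viewing a right self-injective ring $R$ as a right module over itself. The key observation is that if $R$ is right self-injective, then $R_R$ is an injective module, hence in particular quasi-injective, and its endomorphism ring is $S = \mathrm{End}(R_R) \cong R$ acting by left multiplication. So the hypothesis of Corollary~\ref{prgd} is satisfied with $M = R_R$.

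Given that setup, I would first invoke Remark~\ref{son} to translate the notion ``generalized right principally projective ring'' into the notion ``$\pi$-Rickart module'': explicitly, $R$ is generalized right principally projective if and only if $R_R$ is a $\pi$-Rickart module. Second, I would apply Corollary~\ref{prgd} to the quasi-injective module $R_R$: it is $\pi$-Rickart if and only if its endomorphism ring $S = R$ is $\pi$-regular. Chaining the two equivalences yields the statement: $R$ is generalized right principally projective if and only if $R$ is $\pi$-regular.

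There is no real obstacle here; the only thing to verify carefully is the identification $\mathrm{End}(R_R) \cong R$ so that the notion ``$S$ is $\pi$-regular'' coincides with ``$R$ is $\pi$-regular,'' and that right self-injectivity of $R$ is exactly the statement that $R_R$ is injective (hence quasi-injective), which is the hypothesis needed to apply Corollary~\ref{prgd}. Both are standard, so the proof is essentially a two-line application of the earlier results.
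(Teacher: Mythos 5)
Your proposal is correct and matches the paper's intended derivation exactly: the corollary is stated immediately after Corollary~\ref{prgd} precisely because $R_R$ is quasi-injective with $\mathrm{End}(R_R)\cong R$, and Remark~\ref{son} supplies the translation between ``generalized right principally projective'' and ``$\pi$-Rickart.'' Nothing further is needed.
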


\begin{thm}  Let $R$ be a right self-injective ring. Then the
following are equivalent.
\begin{enumerate}
    \item[{\rm (1)}] $M_n(R)$ is $\pi$-regular for every positive
integer $n$.
   \item[{\rm (2)}] Every finitely generated projective $R$-module is
   $\pi$-Rickart.
\end{enumerate}
\end{thm}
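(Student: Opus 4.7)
The plan is to prove the equivalence by stitching together three results already established in the paper: Lemma \ref{pireg} (if $S$ is $\pi$-regular then $M$ is $\pi$-Rickart), Lemma \ref{dorsey}(1) (corners of $\pi$-regular rings are $\pi$-regular), and Corollary \ref{prgd} (for quasi-injective $M$, $\pi$-Rickartness is equivalent to $\pi$-regularity of $S$). The key observation making the whole argument work is that when $R$ is right self-injective, every free module $R^n$ is injective (finite direct sums of injectives are injective), and hence in particular quasi-injective, so Corollary \ref{prgd} applies with $S = \mathrm{End}_R(R^n) = M_n(R)$.

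For $(1) \Rightarrow (2)$, I would take an arbitrary finitely generated projective $R$-module $M$. Since $M$ is a direct summand of some $R^n$, its endomorphism ring is of the form $eM_n(R)e$ for some idempotent $e \in M_n(R)$. Assuming $M_n(R)$ is $\pi$-regular, Lemma \ref{dorsey}(1) gives that $eM_n(R)e$ is $\pi$-regular as well, and then Lemma \ref{pireg} shows that $M$ is $\pi$-Rickart.

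For $(2) \Rightarrow (1)$, fix a positive integer $n$ and consider $R^n$, which is a finitely generated projective $R$-module and therefore $\pi$-Rickart by hypothesis. Because $R$ is right self-injective, $R^n$ is injective and in particular quasi-injective; its endomorphism ring is $M_n(R)$. Applying Corollary \ref{prgd} to $M = R^n$ yields that $M_n(R)$ is $\pi$-regular, which is what is required.

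There is no real obstacle here: the argument is just a careful packaging of prior results, and the only point requiring mild attention is the observation that self-injectivity of $R$ transfers to $R^n$, which I would state explicitly (finite direct sums of injective modules are injective) to justify invoking the quasi-injective hypothesis of Corollary \ref{prgd}.
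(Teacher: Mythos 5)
Your proposal is correct and follows essentially the same route as the paper: $(1)\Rightarrow(2)$ via $S\cong eM_n(R)e$, Lemma \ref{dorsey}(1) and Lemma \ref{pireg}, and $(2)\Rightarrow(1)$ by viewing $M_n(R)$ as the endomorphism ring of the quasi-injective module $R^n$ and invoking Corollary \ref{prgd}. You merely make explicit two steps the paper leaves implicit (the corner-ring reduction and the fact that $R^n$ is injective, hence quasi-injective, over a right self-injective ring), which is a reasonable clarification rather than a different argument.
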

\begin{proof} (1) $\Rightarrow$ (2) Let $M$ be a finitely generated
projective $R$-module. Then $M\cong eR^n$ for some positive
integer $n$ and $e^2=e\in M_n(R)$. Hence $S$ is isomorphic to
$eM_n(R)e$. By (1), $S$ is $\pi$-regular. Thus $M$ is
$\pi$-Rickart due to Lemma \ref{pireg}.

(2) $\Rightarrow$ (1) $M_n(R)$ can be viewed as the endomorphism
ring of a projective $R$-module $R^n$ for any positive integer
$n$. By (2), $R^n$ is $\pi$-Rickart, and by hypothesis, it is
quasi-injective. Then $M_n(R)$ is $\pi$-regular by Corollary
\ref{prgd}.
\end{proof}

The proof of Lemma \ref{maybe} may be in the context. We include
it as an easy reference.

\begin{lem}\label{maybe} Let $M$ be a module.
Then $S$ is a $\pi$-regular ring if and only if for each $f\in S$,
there exists a positive integer $n$ such that Ker$f^n$ and Im$f^n$
are direct summands of $M$.
\end{lem}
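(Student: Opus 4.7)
The plan is to verify the equivalence by an elementary direct-summand argument in each direction, using only the idempotent/projection calculus in $S$.

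For the forward implication, assume $S$ is $\pi$-regular and let $f\in S$. Pick $n\geq 1$ and $g\in S$ with $f^n=f^ngf^n$. The element $e=gf^n$ is then an idempotent of $S$, and just as in the proof of Lemma \ref{pireg} one checks $\mathrm{Ker}\,f^n=(1-e)M$: the inclusion $(1-e)M\subseteq\mathrm{Ker}\,f^n$ is clear from $f^n(1-gf^n)=f^n-f^ngf^n=0$, while $m\in\mathrm{Ker}\,f^n$ gives $em=gf^nm=0$ and hence $m=(1-e)m$. This already realizes $\mathrm{Ker}\,f^n$ as a direct summand. For the image, set $e'=f^ng$, which is also idempotent since $(f^ng)^2=f^n(gf^ng)=f^ng$ (using $f^n=f^ngf^n$ on the right). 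The identity $f^n=f^ngf^n=e'f^n$ shows $\mathrm{Im}\,f^n\subseteq e'M$, and conversely $e'M=f^ngM\subseteq f^nM=\mathrm{Im}\,f^n$, so $\mathrm{Im}\,f^n=e'M$ is a direct summand.

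For the converse, let $f\in S$ and choose $n$ so that $\mathrm{Ker}\,f^n$ and $\mathrm{Im}\,f^n$ are both direct summands of $M$. Write $M=\mathrm{Ker}\,f^n\oplus N=\mathrm{Im}\,f^n\oplus L$ for some submodules $N,L\leq M$. The key observation is that the restriction $f^n|_N\colon N\to \mathrm{Im}\,f^n$ is an $R$-module isomorphism: it is injective because $N\cap\mathrm{Ker}\,f^n=0$, and it is surjective since every $m\in M$ decomposes as $m=k+n_0$ with $k\in\mathrm{Ker}\,f^n$, $n_0\in N$, whence $f^nm=f^nn_0\in f^nN$. Let $\varphi\colon\mathrm{Im}\,f^n\to N$ denote the inverse isomorphism and let $\pi\colon M\to\mathrm{Im}\,f^n$ be the projection onto $\mathrm{Im}\,f^n$ along $L$. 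Define $g\in S$ as the composite $g=\iota_N\circ\varphi\circ\pi$, where $\iota_N\colon N\hookrightarrow M$ is the inclusion.

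It remains to check that $f^ngf^n=f^n$. For any $m\in M$, $f^nm\in\mathrm{Im}\,f^n$, so $\pi(f^nm)=f^nm$ and $gf^nm=\varphi(f^nm)\in N$ is the unique element of $N$ with $f^n(\varphi(f^nm))=f^nm$. Therefore $f^ngf^nm=f^nm$ for every $m$, so $f^n=f^ngf^n$ in $S$. This proves that $S$ is $\pi$-regular and completes the equivalence. The only conceptual point of substance is the use of the two direct summand decompositions to construct the quasi-inverse $g$; everything else reduces to idempotent bookkeeping and the projection construction, neither of which should present a serious obstacle.
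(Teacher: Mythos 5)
Your proof is correct. Note that the paper itself gives no proof of Lemma \ref{maybe} (the authors only remark that it is included ``as an easy reference''), so there is nothing to compare against; your argument is the standard one. The forward direction is exactly the computation from Lemma \ref{pireg} together with the observation that $f^ng$ is an idempotent whose image is $\operatorname{Im}f^n$, and the converse correctly builds the quasi-inverse $g=\iota_N\circ\varphi\circ\pi$ from the two decompositions $M=\operatorname{Ker}f^n\oplus N=\operatorname{Im}f^n\oplus L$ and verifies $f^ngf^n=f^n$ pointwise. All maps involved are $R$-linear, so $g\in S$ as required, and the argument is complete.
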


\begin{thm} Let $M$ be a $\pi$-Rickart module. Then the right singular ideal $Z_r(S)$ of $S$ is nil
and $Z_r(S)\subseteq J(S)$.
\end{thm}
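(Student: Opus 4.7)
The plan is to show first that every element of $Z_r(S)$ is nilpotent; the inclusion in $J(S)$ will then follow from the standard fact that any nil two-sided ideal lies in the Jacobson radical (since $Z_r(S)$ is known to be a two-sided ideal of $S$).

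To prove nilpotence, I would start with an arbitrary $f \in Z_r(S)$, so that $r_S(f)$ is essential in $S_S$. The $\pi$-Rickart hypothesis on $M$ gives $r_M(f^n) = eM$ for some $e^2 = e \in S$ and some positive integer $n$, and by Lemma \ref{grpp1} the corresponding right annihilator in $S$ satisfies $r_S(f^n) = eS$. Since $r_S(f) \subseteq r_S(f^n)$ and the former is essential in $S_S$, the summand $eS = r_S(f^n)$ is essential in $S_S$ as well.

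The key step is then to observe that a direct summand that is essential must be the whole module: from $S = eS \oplus (1-e)S$ and $eS \cap (1-e)S = 0$ together with essentiality, we get $(1-e)S = 0$, hence $e = 1$ and $r_S(f^n) = S$. This forces $f^n = f^n \cdot 1 = 0$, so $f$ is nilpotent. Therefore $Z_r(S)$ is nil, and since it is a two-sided ideal of $S$, it is contained in $J(S)$.

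I do not anticipate a real obstacle here; the only subtlety is invoking Lemma \ref{grpp1} to transfer the annihilator condition from $M$ to $S$ and then exploiting essentiality of a direct summand. No new technical machinery beyond the $\pi$-Rickart definition and Lemma \ref{grpp1} is required.
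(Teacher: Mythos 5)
Your proposal is correct and follows essentially the same route as the paper: pass from $r_M(f^n)=eM$ to $r_S(f^n)=eS$ via Lemma \ref{grpp1}, use essentiality of $r_S(f)\subseteq r_S(f^n)$ to force $eS=S$ and hence $f^n=0$, and then conclude $Z_r(S)\subseteq J(S)$ from nilness of the ideal (the paper just unwinds that standard fact by noting $(fg)^n=0$ makes $1-fg$ invertible).
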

\begin{proof} Let $f\in Z_r(S)$. Since $M$ is $\pi$-Rickart,
$r_M(f^n)=eM$ for some positive integer $n$ and $e=e^2\in S$. By
Lemma \ref{grpp1}, $r_S(f^n)=eS$. Since $r_S(f^n)$ is essential in
$S$ as a right ideal,  $r_S(f^n)=S$. This implies that $f^n=0$,
and so $Z_r(S)$ is nil. On the other hand, for any $g\in S$ and
$f\in Z_r(S)$,  according to previous discussion, $(fg)^n=0$ for
some positive integer $n$. Hence $1-fg$ is invertible. Thus $f\in
J(S)$. Therefore $Z_r(S)\subseteq J(S)$.
\end{proof}

\begin{prop}\label{local} The following are equivalent for a module $M$.
\begin{enumerate}
\item[{\rm (1)}] Each element of $S$ is either a monomorphism or nilpotent.
\item[{\rm (2)}] $M$ is an indecomposable $\pi$-Rickart
module.
\end{enumerate}
\end{prop}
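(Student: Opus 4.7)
The plan is to prove the two implications separately, using as the crucial bridge the standard fact that idempotent endomorphisms in $S$ correspond bijectively to direct summand decompositions of $M$; in particular, $M$ is indecomposable if and only if the only idempotents in $S$ are $0$ and $1$.

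For $(2) \Rightarrow (1)$, I would take an arbitrary $f \in S$. By the $\pi$-Rickart property there exist $e^2 = e \in S$ and a positive integer $n$ with $r_M(f^n) = eM$. Indecomposability of $M$ forces $e = 0$ or $e = 1$. In the first case $r_M(f^n) = 0$, which makes $f^n$, and therefore $f$, a monomorphism. In the second case $r_M(f^n) = M$, i.e.\ $f^n = 0$, so $f$ is nilpotent.

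For $(1) \Rightarrow (2)$, I would first establish indecomposability: if $M = M_1 \oplus M_2$ with both summands nonzero, the projection $\pi_1$ onto $M_1$ along $M_2$ belongs to $S$, but it is neither a monomorphism (its kernel is $M_2 \neq 0$) nor nilpotent (since $\pi_1^2 = \pi_1 \neq 0$), contradicting the hypothesis. Next I would verify $\pi$-Rickartness by cases: if $f \in S$ is a monomorphism, then so is every power $f^n$, whence $r_M(f) = 0 = 0\cdot M$; if $f$ is nilpotent with $f^n = 0$, then $r_M(f^n) = M = 1\cdot M$. In either case the annihilator is cut out by an idempotent of $S$.

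There is no real obstacle here; the only step deserving a moment's care is noting that a monomorphism remains a monomorphism under iteration, which is immediate by induction. The whole argument hinges on translating the dichotomy ``monomorphism vs.\ nilpotent'' in $S$ into the dichotomy ``$e = 0$ vs.\ $e = 1$'' among the idempotents of $S$, using indecomposability of $M$.
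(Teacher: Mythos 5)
Your proposal is correct and follows essentially the same route as the paper: both directions hinge on the observation that indecomposability of $M$ is equivalent to $S$ having no idempotents other than $0$ and $1$, and the case analysis ($e=0$ gives a monomorphism via $\ker f \subseteq \ker f^n$, $e=1$ gives $f^n=0$) matches the paper's argument. The only cosmetic difference is that you prove indecomposability contrapositively via a nontrivial projection, while the paper argues directly that any idempotent, being a monomorphism or nilpotent, must be $1$ or $0$.
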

\begin{proof} (1) $\Rightarrow$ (2) Let $e=e^2\in S$. If $e$ is
nilpotent, then $e=0$. If $e$ is a monomorphism, then $e(m-em)=0$
implies $em=m$ for any $m\in M$. Hence $e=1$, and so $M$ is
indecomposable. Also for any $f\in S$, $r_M(f)=0$ or $r_M(f^n)=M$
for some positive integer $n$. Therefore $M$ is $\pi$-Rickart.

(2) $\Rightarrow$ (1)  Let $f\in S$. Then $r_M(f^n)$ is a direct
summand of $M$ for some positive integer $n$. As $M$ is
indecomposable, we see that $r_M(f^n)=0$ or $r_M(f^n)=M$. This
implies that $f$ is a monomorphism or nilpotent.
\end{proof}

\begin{thm}\label{pimorphic} Consider the following conditions for a module $M$.
\begin{enumerate}
    \item[{\rm (1)}] $S$ is a local ring with nil Jacobson radical.
   \item[{\rm (2)}] $M$ is an indecomposable $\pi$-Rickart
   module.
\end{enumerate}
Then {\rm (1)} $\Rightarrow$ {\rm (2)}. If $M$ is a morphic
module, then {\rm (2)} $\Rightarrow$ {\rm (1)}.
\end{thm}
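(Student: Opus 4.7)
My plan is to deduce both directions from Proposition~\ref{local}, which equates the indecomposable $\pi$-Rickart condition with the statement that every element of $S$ is either a monomorphism or nilpotent.

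For (1)$\Rightarrow$(2), I would argue as follows. Assume $S$ is local with nil Jacobson radical. Every $f\in S$ is then either a unit, in which case $f$ is an automorphism of $M$ and in particular a monomorphism, or a non-unit lying in $J(S)$, in which case $f$ is nilpotent. Thus the hypothesis of Proposition~\ref{local} is met, and $M$ is indecomposable and $\pi$-Rickart.

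For (2)$\Rightarrow$(1) under the extra assumption that $M$ is morphic, I would first apply Proposition~\ref{local} to conclude that each $f\in S$ is either a monomorphism or nilpotent. Morphicity provides an isomorphism $M/fM\cong r_M(f)$ for every $f\in S$, so whenever $f$ is a monomorphism we have $r_M(f)=0$, hence $fM=M$, hence $f$ is an automorphism and therefore a unit in $S$. Combining these two observations, every element of $S$ is either a unit or nilpotent.

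It remains to upgrade this to the statement that $S$ is local with nil $J(S)$, which I would do via the standard criterion that a ring is local if and only if for every element $a$ at least one of $a$ or $1-a$ is a unit. Given $a\in S$, either $a$ is already a unit, or $a$ is nilpotent with $a^n=0$ and then $1-a$ is inverted by $1+a+\cdots+a^{n-1}$. Hence $S$ is local, and its Jacobson radical, namely the set of non-units, coincides with the set of nilpotents and is therefore nil. The main conceptual point, rather than a technical obstacle, is to notice that the morphic hypothesis is precisely what promotes the monomorphisms supplied by Proposition~\ref{local} to automorphisms, which is exactly the ingredient needed to turn ``injective'' into ``invertible'' in $S$.
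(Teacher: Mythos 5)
Your proof is correct and follows essentially the same route as the paper: both directions reduce to the dichotomy ``every element of $S$ is a monomorphism or nilpotent'' from Proposition~\ref{local}, with the morphic hypothesis upgrading monomorphisms to units (the paper cites \cite[Corollary 2]{NC} for this step, which you rederive from the definition of morphic) and the standard ``$a$ or $1-a$ is a unit'' criterion yielding locality with nil radical.
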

\begin{proof} (1) $\Rightarrow$ (2) Clearly, each element of $S$ is either a monomorphism
or nilpotent. Then $M$ is  indecomposable $\pi$-Rickart due to
Proposition \ref{local}.

(2) $\Rightarrow$ (1)  Let $f\in S$. Then $r_M(f^n)=eM$ for some
positive integer $n$ and an idempotent $e$ in $S$. If $e=0$, then
$f$ is a monomorphism. Since $M$ is morphic, $f$ is  invertible by
\cite[Corollary 2]{NC}. If $e=1$, then $f^n=0$. Hence $1-f$ is
invertible. This implies that $S$ is a local ring. Now let $0\neq
f\in J(S)$. Since $f$ is not invertible, there exists a positive
integer $n$ such that $r_M(f^n)=M$. Therefore $J(S)$ is nil.
\end{proof}

The next result can be obtained from Theorem \ref{pimorphic} and
\cite[Lemma 2.11]{HC}.
\begin{cor} Let $M$ be an indecomposable $\pi$-Rickart module. If
$M$ is morphic, then $S$ is a left and right $\pi$-morphic ring.
\end{cor}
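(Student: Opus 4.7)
The plan is to recognize this statement as a direct packaging of the previous theorem with the cited lemma, so the main work is just tracing the hypotheses through. Explicitly, I would first invoke Theorem \ref{pimorphic}: since $M$ is assumed to be indecomposable, $\pi$-Rickart, \emph{and} morphic, the implication (2) $\Rightarrow$ (1) of that theorem (which is where morphicity is actually used, via \cite[Corollary 2]{NC}) applies and yields that $S$ is a local ring whose Jacobson radical $J(S)$ is nil.

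Next, I would quote \cite[Lemma 2.11]{HC}, which asserts that any local ring with nil Jacobson radical is automatically both left and right $\pi$-morphic. Feeding the conclusion of the previous paragraph into this lemma gives immediately that $S$ is left and right $\pi$-morphic, finishing the proof.

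The only potential obstacle is bookkeeping of the hypothesis match. One should confirm that \cite[Lemma 2.11]{HC} is indeed stated for ``local with nil $J$'' rather than a strictly stronger version (for instance requiring $J(S)$ to be nilpotent, or $S$ to be semiperfect). If the cited lemma is stated under a stronger hypothesis, one would first need to observe that nil-radical locality gives exactly what is needed: every non-unit lies in $J(S)$, hence is nilpotent, so each $f\in S$ is either a unit or nilpotent, and for such $f$ the morphic identities $S/f^nS \cong r_S(f^n)$ and $S/Sf^n \cong \ell_S(f^n)$ (for $n$ large enough that $f^n=0$ in the nilpotent case, and with $n=1$ in the unit case) are trivial to verify directly. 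Beyond this sanity check there is nothing further to prove.
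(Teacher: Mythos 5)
Your proposal is correct and matches the paper exactly: the paper itself offers no proof beyond the remark that the corollary ``can be obtained from Theorem \ref{pimorphic} and \cite[Lemma 2.11]{HC},'' which is precisely the two-step argument you give. Your added sanity check (that a local ring with nil Jacobson radical has every element either a unit or nilpotent, making the $\pi$-morphic isomorphisms trivial) is a harmless and correct bonus.
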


In \cite{LRR1}, a module $M$ is called {\it dual Rickart } if for
any $f\in S$, Im$f=eM$ for some $e^2=e\in S$.  In a subsequent
paper the present authors continue studying some generalizations
of dual Rickart modules. The  module $M$ is called {\it  dual
$\pi$-Rickart} if for any $f\in S$, there exist $e^2=e\in S$ and a
positive integer $n$ such that Im$f^n=eM$. We end this paper to
demonstrate the relations between $\pi$-Rickart and dual
$\pi$-Rickart modules.

\begin{prop} Let $M$ be a module with C$_2$ condition.
If $M$ is a $\pi$-Rickart module, then it is dual $\pi$-Rickart.
\end{prop}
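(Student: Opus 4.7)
The plan is to reduce the statement to a direct computation using the characterization of $\pi$-Rickart modules with $C_2$ condition given in Theorem \ref{C2}. That theorem tells us precisely that under these hypotheses the endomorphism ring $S$ is $\pi$-regular, and $\pi$-regularity of $S$ is exactly what is needed to produce the idempotent describing $\mathrm{Im}(f^n)$.

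First I would invoke Theorem \ref{C2} to conclude that $S$ is a $\pi$-regular ring. So for an arbitrary $f \in S$ there exist a positive integer $n$ and an element $g \in S$ with $f^n = f^n g f^n$. Next I would set $e = f^n g \in S$ and verify it is idempotent: $e^2 = f^n g f^n g = (f^n g f^n) g = f^n g = e$.

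Then I would check that $\mathrm{Im}(f^n) = eM$. For the inclusion $eM \subseteq \mathrm{Im}(f^n)$ one simply writes $eM = f^n(gM) \subseteq f^n M = \mathrm{Im}(f^n)$. For the reverse inclusion, take an arbitrary $m \in M$; then $f^n m = (f^n g f^n) m = e(f^n m) \in eM$, so $\mathrm{Im}(f^n) \subseteq eM$. Combining gives $\mathrm{Im}(f^n) = eM$, which is precisely the dual $\pi$-Rickart condition.

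There is no real obstacle: the weight of the argument rests entirely on Theorem \ref{C2}, whose availability makes the passage from $\pi$-Rickart to $\pi$-regular (and hence to the idempotent-generated image) routine. The only point that requires a moment of care is checking that the idempotent chosen to describe the kernel (namely $1 - g f^n$, as in the proof of Lemma \ref{pireg}) is distinct from the idempotent $f^n g$ used here to describe the image, but this is merely a bookkeeping remark, not a genuine difficulty.
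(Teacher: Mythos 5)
Your proof is correct and follows essentially the same route as the paper: both arguments rest on Theorem \ref{C2} to conclude that $S$ is $\pi$-regular and then extract the idempotent generating $\mathrm{Im}(f^n)$. The only difference is that the paper delegates this last step to Lemma \ref{maybe} (stated without proof), whereas you carry out the computation with $e = f^n g$ explicitly, which is a perfectly valid substitute.
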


\begin{proof} It follows from Theorem \ref{C2} and Lemma
\ref{maybe}. \end{proof}

Recall that a module $M$ is said to have {\it $D_2$ condition} if
any submodule $N$ of $M$ with $M/N$ isomorphic to a direct summand
of $M$, then $N$ is a direct summand of $M$.

\begin{prop} Let $M$ be a module with D$_2$ condition.
 If $M$ is a dual $\pi$-Rickart, then it is $\pi$-Rickart.
\end{prop}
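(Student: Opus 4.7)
The plan is to apply the $D_2$ condition directly to the kernel of a suitable power of each endomorphism. Given $f \in S$, I would use the dual $\pi$-Rickart hypothesis to extract a positive integer $n$ and an idempotent $e \in S$ with $\mathrm{Im}\, f^n = eM$. The goal is then to show that $r_M(f^n) = \mathrm{Ker}\, f^n$ is itself a direct summand of $M$, which gives $\pi$-Rickartness.

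The bridge between these two facts is the First Isomorphism Theorem applied to $f^n$: we have a surjection $M \twoheadrightarrow \mathrm{Im}\, f^n$ with kernel $\mathrm{Ker}\, f^n$, hence $M/\mathrm{Ker}\, f^n \cong \mathrm{Im}\, f^n = eM$. Since $eM$ is a direct summand of $M$ (as $e$ is idempotent), the quotient $M/\mathrm{Ker}\, f^n$ is isomorphic to a direct summand of $M$. The $D_2$ condition then immediately forces $\mathrm{Ker}\, f^n$ to be a direct summand of $M$, i.e., $r_M(f^n) = e'M$ for some $(e')^2 = e' \in S$, which is exactly the $\pi$-Rickart condition for $f$.

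There is essentially no obstacle in this argument: it is a direct chain of implications (dual $\pi$-Rickart $\Rightarrow$ image is a summand $\Rightarrow$ quotient isomorphic to a summand $\Rightarrow$ $D_2$ yields kernel is a summand). The only thing to be careful about is invoking the correct form of the $D_2$ condition, namely that it applies to submodules $N$ whose quotient $M/N$ is isomorphic (not necessarily equal) to a direct summand, which is exactly what we get from the isomorphism $M/\mathrm{Ker}\, f^n \cong eM$. No delicate choice of idempotent or power is needed beyond what dual $\pi$-Rickart already supplies.
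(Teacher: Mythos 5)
Your argument is correct and is exactly the paper's proof: the paper likewise observes that $M/r_M(f^n)\cong \mathrm{Im}\,f^n=eM$ and invokes the $D_2$ condition to conclude that $r_M(f^n)$ is a direct summand. You have simply spelled out the same one-line argument in more detail.
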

\begin{proof} Since $M/r_M(f^n)\cong$ Im$f^n$ for any $f\in S$, D$_2$ condition
completes the proof.
\end{proof}

\begin{prop} Let $M$ be a projective morphic module. Then $M$ is  $\pi$-Rickart if and only if it is dual
$\pi$-Rickart.
\end{prop}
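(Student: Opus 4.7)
The plan is to exploit the morphic and projective hypotheses symmetrically. The morphic property gives an isomorphism $M/f^nM \cong r_M(f^n)$ for every $n\geq 1$, which lets me transfer information between a kernel and the corresponding cokernel. Projectivity of $M$, together with the fact that every direct summand of a projective module is projective, will force the relevant short exact sequences to split. I will use throughout the standard fact that a submodule $N\leq M$ is a direct summand precisely when $N=eM$ for some idempotent $e\in S$, which is the form demanded by both definitions.

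For the forward implication, I take a $\pi$-Rickart $M$, fix $f\in S$, and choose $n\geq 1$ and an idempotent $e\in S$ with $r_M(f^n)=eM$. Applying the morphic property to $f^n$ yields $M/f^nM\cong r_M(f^n)=eM$, which is projective as a direct summand of $M$. The short exact sequence $0\to f^nM\to M\to M/f^nM\to 0$ therefore splits, so Im$f^n=f^nM$ is a direct summand of $M$; writing it as $e'M$ for a suitable idempotent $e'\in S$ gives the dual $\pi$-Rickart property.

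For the converse, I take a dual $\pi$-Rickart $M$, fix $f\in S$, and choose $n\geq 1$ and an idempotent $e\in S$ with Im$f^n=eM$. Viewing $f^n$ as a surjection $M\to eM$ produces the short exact sequence $0\to r_M(f^n)\to M\to eM\to 0$. Since $eM$ is a direct summand of the projective module $M$, it is projective, so this sequence splits. Consequently $r_M(f^n)$ is a direct summand of $M$ and hence equals $e'M$ for some idempotent $e'\in S$, as required.

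I do not anticipate a serious obstacle: the only real content is recognizing which side of each short exact sequence is already known to be a direct summand (the cokernel in one direction via morphicness, the image in the other by hypothesis), after which projectivity delivers the splitting essentially for free. It is worth noting in passing that the morphic hypothesis is needed only for the forward implication; projectivity alone suffices for the backward direction.
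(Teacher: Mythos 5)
Your proof is correct and supplies exactly the argument the paper leaves implicit (its proof is just ``Clear''): morphicness turns the known summand $r_M(f^n)=eM$ into a projective cokernel $M/f^nM$, forcing $0\to f^nM\to M\to M/f^nM\to 0$ to split, while in the converse direction projectivity of $\mathrm{Im}\,f^n=eM$ splits $0\to r_M(f^n)\to M\to eM\to 0$. Your closing observation that the morphic hypothesis is needed only for the forward implication is also accurate and consistent with the paper's preceding proposition on the $D_2$ condition.
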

\begin{proof} Clear.
\end{proof}


\begin{thebibliography}{20}
\bibitem{AHH3} N. Agayev, S. Halicioglu and A. Harmanci,
 {\it On Rickart modules}, appears in  Bull. Iran. Math. Soc.  available at http://www.iranjournals.ir/ims/bulletin/
\bibitem{AFS} E. P. Armendariz, J. W. Fisher and R. L. Snider, {\it  On
injective and surjective endomorphisms of finitely generated
modules},  Comm. Algebra 6(7)(1978),  659-672.
\bibitem{GV} A. Ghorbani and M. R. Vedadi, {\it Epi-Retractable
modules and some applications}, Bull. Iran. Math. Soc.
35(1)(2009), 155-166.
\bibitem{Hat}  A. Hattori, {\it A foundation of the torsion theory over general
rings}, Nagoya Math. J. 17(1960), 147-158.
\bibitem{Hir} Y. Hirano, {\it On generalized p.p.-rings}, Mathematical Journal of
Okayama University 25(1)(1983), 7-11.
\bibitem{HKS} A. Hmaimou, A.  Kaidi and E. Sanchez Campos, {\it Generalized fitting modules and rings},
J. Algebra 308(1)(2007), 199-214.
\bibitem{HC} Q. Huang and J. Chen, {\it $\pi$-Morphic rings}, Kyungpook Math.
J. 47(2007), 363-372.
\bibitem{HKL} C. Huh, H. K. Kim, and Y. Lee, {\it p.p. rings and generalized p.p.
rings}, J. Pure Appl. Algebra 167(1)(2002), 37-52.
\bibitem{Ka} I. Kaplansky,   Rings of Operators,  Math. Lecture Note Series, Benjamin, New York, 1965.
\bibitem{L} T. Y. Lam, Lectures on Modules and Rings,
Springer-Verlag, New York, 1999.
\bibitem{L1} T. Y. Lam, A first course in noncommutative rings, Springer-Verlag, New
York, 2001.
\bibitem{LRR}  G. Lee, S. T. Rizvi and C. S. Roman, {\it Rickart Modules},  Comm. Algebra  38(11)(2010), 4005-4027.
\bibitem{LRR1} G. Lee, S. T. Rizvi and C. S. Roman, {\it Dual Rickart Modules}, Comm.
Algebra 39(2011), 4036-4058.
\bibitem{NC} W. K. Nicholson and E. Sanchez Campos, {\it Morphic
Modules}, Comm. Algebra 33(2005), 2629-2647.
\bibitem{TR}  S. T. Rizvi and C. S. Roman,  {\it Baer and Quasi-Baer Modules}, Comm. Algebra  32(2004), 103-123.
\bibitem{TR3} S. T. Rizvi and C. S. Roman, {\it  On direct sums of Baer modules}, J. Algebra 321(2009), 682-696.
\bibitem{Ro} J. E. Roos,  {\it Sur les categories spectrales localement distributives},  C. R. Acad. Sci. Paris Ser.
 A-B  265(1967), 14-17.
\bibitem{V} W. V. Vasconcelos, {\it On finitely generated flat
modules}, Trans. Amer. Math. Soc. 138(1969), 505-512.
\bibitem{Wa} R. Ware, {\it Endomorphism rings of projective modules}, Trans. Amer. Math. Soc. 155, 1(1971), 233-256.
\bibitem{Wi1}  R. Wisbauer, Foundations of module and ring theory, Gordon and Breach, Reading, 1991.
\bibitem{Ze} J. M. Zelmanowitz, {\it Regular modules}, Trans. Amer. Math. Soc. 163(1972), 341-355.
\end{thebibliography}
\end{document}